\newtheorem{theorem}{Theorem}[section]
\newtheorem{proposition}[theorem]{Proposition}
\newtheorem{lemma}[theorem]{Lemma}
\newtheorem{corollary}[theorem]{Corollary}
\theoremstyle{definition}
\newtheorem*{claim*}{Claim}
\newtheorem{definition}[theorem]{Definition}
\begin{document}
\title{Projective length, phantom extensions, and the structure of torsion
modules}
\author{Martino Lupini}
\address{Dipartimento di Matematica, Universit\`{a} di Bologna\\
Bologna\\
Italy}
\email{martino.lupini@unibo.it}
\urladdr{http://www.lupini.org/}
\thanks{The author was partially supported by the Marsden Fund Fast-Start
Grant VUW1816 and a Rutherford Discovery Fellowship VUW2002
\textquotedblleft Computing the Shape of Chaos\textquotedblright\ from the
Royal Society of New Zealand, by the Starting Grant 101077154
\textquotedblleft Definable Algebraic Topology\textquotedblright\ from the
European Research Council, by the Gruppo Nazionale per le Strutture
Algebriche, Geometriche e le loro Applicazioni (GNSAGA) of the Istituto
Nazionale di Alta Matematica (INDAM), and by the University of Bologna.}
\subjclass[2000]{Primary 20K10, 20K35, 54H05; Secondary 20K40, 20K45}
\keywords{Group extension, Polish module, pure extension, Borel complexity
theory, potential complexity, Borel reducibility, cotorsion functor, module
with a Polish cover, torsion module, Ulm submodule, derived functor,
projective object, homological algebra}
\date{\today }

\begin{abstract}
The notion of phantom extension of order a given ordinal $\alpha $ has been
introduced in collaboration with Casarosa, as an algebraic analogue of the
order of a phantom map in topology, to study the structure of flat modules.
In this companion paper we characterize phantom extension of \emph{torsion}
modules over a countable Dedekind domain $R$.\ After localizing, one can
assume that $R$ is a discrete valuation domain with maximal ideal generated
by $p\in R$. In this case, the phantom extensions of order $\alpha $ of a
countable torsion module are precisely the $p^{\omega \left( 1+\alpha
\right) }$-pure extensions introduced by Nunke in the 1960s. A module has
projective length at most $\alpha $ if and only if it is a projective object
with respect to the exact structure defined by phantom extensions of order $%
\alpha $. We prove that a countable torsion module has projective length at
most $\alpha $ if and only if it is reduced and has Ulm length at most $%
1+\alpha $, if and only if it is the colimit of a presheaf of finite torsion modules over a countable well-founded forest of rank at most $1+\alpha $.
\end{abstract}

\maketitle

\section{Introduction}

The notion of \emph{phantom extension }of order $\alpha <\omega _{1}$ has
been introduced in collaboration with Casarosa in order to study the
structure of flat modules over a Dedekind (or, more generally,\ Pr\"{u}fer)
domain. This higher-order notion of purity is inspired by a corresponding
notion of higher-order phantom map in topology. Classically, phantom maps
(of order $0$) from a locally compact second countable Hausdorff space $X$
to a countable CW complex $P$ are the continuous maps $\phi :X\rightarrow P$
whose nontriviality up to homotopy cannot be detected on compact subspaces
of $X$ \cite{mcgibbon_phantom_1995}. In other words, $\phi |_{K}$ is
nullhomotopic for every compact subspace $K$ of $X$. This is equivalent to
the assertion that $\phi $ factors up to homotopy through the quotient map $%
X\rightarrow X/K$ obtained by identifying $K$ to a point. (Precisely, these
are the so-called phantom maps \emph{of the second kind}, the first kind
being defined in terms of restrictions to finite skeleta, in the case when $%
X $ is also a polyhedron.)

The stronger notion of phantom map of order $1$ is obtained by requiring
that, for an arbitrary compact subspace, $K$ of $X$ there exist a \emph{%
phantom} map $X/K\rightarrow P$ that induces $\phi $ up to homotopy.
Recursively, one defines more generally the notion of phantom map of order $%
\alpha $ for an arbitrary ordinal, by requiring that it be induced up to
homotopy, for every compact subspace $K$ of $X$, by a phantom map $%
X/K\rightarrow P$ of order $\beta <\alpha $.

\emph{Purity} of extensions can be seen as an algebraic analogue of phantom
maps. Indeed an extension $\kappa $ of a module $C$ by another module $A$ is
pure (of order $0$) if its nontriviality cannot be detected on finite
submodules of $C$. In other words, the extension $\kappa $ is trivial when
restricted to any finite submodule $F$ of $C$. This is equivalent to the
assertion that $\kappa $ is induced by an extension of the quotient module $%
C/F$ by $A$. Agan, by requiring that $\kappa $ be induced by an extesion of $%
C/F$ by $A$ that is itself pure of order $0$ one obtains the more stringent
notion of pure extension of order $1$. Proceeding recursively in this
fashion, pure (or phantom) extensions of order $\alpha $ can be defined.

In fact, one can more generally define the notion of \emph{phantom morphism
of order }$\alpha $ in a triangulated category, as remarked in \cite[Section
8]{casarosa_projective_2025}. The notion of phantom extension is obtained as
a particular instance, when applied to the derived category of the category
of modules. Phantom extensions of \emph{flat} (i.e., torsion-free) modules
have been introduced and studied in \cite[Sections 10 and 11]%
{casarosa_projective_2025}. In this companion paper, we consider the case of 
\emph{torsion} modules. Particularly, we observe that in this case the
notion of phantom extension of order $\alpha $ is equivalent to the notion
of $p^{\omega \left( 1+\alpha \right) }$-pure extension as defined by Nunke
in \cite{nunke_purity_1963}; see also \cite%
{keef_variations_1999,keef_injective_1994,keef_generalizations_1994,irwin_primary_1993,keef_representable_1995,nunke_homology_1967,nunke_extensions_1960}%
.

This result is obtained by showing that, when $C$ is torsion, the submodule $%
\mathrm{Ph}^{\alpha }\mathrm{Ext}\left( C,A\right) $ parametrizing phantom
extensions of order $\alpha $ is equal to the Ulm submodule $u_{1+\alpha }%
\mathrm{Ext}\left( C,A\right) $ of order $1+\alpha $; see Theorem \ref%
{Theorem:ulm}. The submodule $\mathrm{Ph}^{\alpha }\mathrm{Ext}\left(
C,A\right) $ has also been characterized topologically in \cite[Theorem 8.3]%
{casarosa_projective_2025} as the \emph{Solecki submodule }$s_{\alpha }%
\mathrm{Ext}\left( C,A\right) $ defined in terms of the\emph{\ module with a
Polish cover }structure on $\mathrm{Ext}\left( C,A\right) $. These
characterizations, together with some classical homological lemmas due to
Nunke, allow us to precisely compute the \emph{complexity }(in the sense of
invariant descriptive set theory) of the classification problem for
extensions of torsion modules in terms of their Ulm invariants; see Theorem %
\ref{Theorem:Solecki-Ext} and Corollary \ref{Corollary:parametrize-Ext}.

The notion of \emph{projective length }of countable flat modules is also
introduced and studied in \cite{casarosa_projective_2025}. We consider this
notion in the torsion case: a countable torsion module $C$ has projective
length at most $\alpha $ if and only if it is a projective object in the
exact category defined by phantom extensions of order $\alpha $ or, in other
words, $\mathrm{Ph}^{\alpha }\mathrm{Ext}\left( C,-\right) =0$. We prove
that a countable torsion module has projective length at most $\alpha $ if
and only if it is reduced and has Ulm length at most $\alpha $, if and only
if it is a colimit of a presheaf of finite torsion modules over a countable
well-founded forest of rank $1+\alpha $. Recall that, if $\mathcal{C}$ is a
category, then a presheaf of finite torsion modules over $\mathcal{C}$ is a
functor from $\mathcal{C}^{\mathrm{op}}$ to the category of finite torsion
modules. In particular, this applies when $\mathcal{C}$ is an ordered set.
We regard a rooted tree as an ordered set in the usual way, by defining for
distinct nodes $x\prec y$ if and only if $y$ belongs to the path from $x$ to
the root. By a forest we mean a disjoint union of rooted trees, with the
induce order.

The rest of this paper is divided into three sections. In Section \ref%
{Section:towers} we present some lemma concerning towers of countable
modules and the functor $\mathrm{lim}^{1}$. These are used to obtain the
description of \textrm{Ph}$^{\alpha }\mathrm{Ext}$ in terms of Ulm
submodules. In Section \ref{Section:homological} we recall some homological
lemmas due to Nunke, and apply them in the case of the functors $C\mapsto
C^{\alpha }$ mapping a module to its $\alpha $-th Ulm submodule. Finally, in
Section \ref{Section:phantom} we use these lemmas to describe the Ulm
submodules of $\mathrm{Ext}\left( C,A\right) $ in terms of the Ulm
submodules of the torsion modules $C$ and $A$. This in turn gives us a
precise description of the complexity of $\mathrm{Ext}\left( C,A\right) $,
or the problem of classifying extensions of $C$ by $A$, as well as a
characterization of the countable torsion modules with a given projective
length.

For notions concerning homological algebra, derived categories, and derived
functors we refer the reader to standard textbooks such as a \cite%
{gelfand_methods_2003,mac_lane_homology_1995,kashiwara_categories_2006}, as
well as \cite[Sections 2 and 7]{casarosa_projective_2025}. For
module-theoretic terminology and background, we refer to \cite%
{rotman_introduction_2009,krylov_modules_2018} and \cite%
{fuchs_infinite_1970,fuchs_infinite_1973}. (While \cite%
{fuchs_infinite_1970,fuchs_infinite_1973} only consider abelian groups, the
terminology and results are easily generalized to modules over an arbitrary
PID.) Definitions and results from Borel complexity theory and descriptive
set theory can be found in the classical monographs \cite%
{kechris_classical_1995,gao_invariant_2009} as well as \cite[Section 12]%
{casarosa_projective_2025}. For fundamental results concerning the category
of Polish modules and pro-countable Polish modules, and the description of
their left heart in terms of modules with a Polish cover, we refer to \cite[%
Sections 4 and 5]{casarosa_projective_2025} together with \cite%
{bergfalk_definable_2024,lupini_looking_2024}.

\subsubsection*{Acknowledgments}

We are grateful to Jeffrey Bergfalk, Luigi Caputi, Nicola Carissimi, Matteo Casarosa, Dan
Christensen, Alessandro Codenotti, Ivan Di Liberti, Luisa Fiorot, Pietro
Freni, Eusebio Gardella, Alexander Kechris, Fosco Loregian, Nicholas Meadows, Andr\'{e} Nies,
Aristotelis Panagiotopoulos, Luca Reggio, Claude Schochet, and Joseph
Zielinski for many helpful comments and remarks on a preliminary version of this paper.

\section{Towers and pure extensions\label{Section:towers}}

In this section we let $R$ be a countable discrete valuation ring (DVR) that
is not a field, with maximal ideal generated by $p\in R$, and assume all
modules to be $R$-modules. We present some results about towers of torsion
modules and their $\mathrm{lim}^{1}$. We say that a module is \emph{finite }%
if it is finitely-generated. This is equivalent to the assertion that it is
a finite direct sum of cyclic modules. We denote by $K$ the field of
fractions of $R$, and set $R\left( p^{\infty }\right) :=K/R$.

For each limit ordinal $\lambda $ we let $\left( \lambda _{n}\right) $ be an
increasing sequence of countable successor ordinals convering to $\lambda $.
For a successor or zero ordinal $\alpha $ we define $\alpha _{n}=\alpha $
for every $n\in \omega $.

\subsection{Ulm length}

Given a module $G$, one defines by recursion:

\begin{itemize}
\item $p^{0}G=G$;

\item $p^{\alpha +1}G=p\left( p^{\alpha }G\right) $;

\item $p^{\lambda }G=\bigcap_{\beta <\lambda }p^{\beta }G$ for $\lambda $
limit.
\end{itemize}

For an arbitrarily ordinal $\alpha $, we set 
\begin{equation*}
u_{\alpha }G=G^{\alpha }:=p^{\omega \alpha }G\text{.}
\end{equation*}%
This is the $\alpha $-th \emph{Ulm submodule} of $G$. We also define $%
G[p^{n}]=\left\{ x\in G:p^{n}x=0\right\} $ for $n\in \omega $. We say that $%
G $ is\emph{\ bounded }if and only if $p^{n}G$ is trivial for some $n<\omega 
$. The \emph{Ulm length} of $G$ is the least $\alpha $ such that $G^{\alpha
} $ is divisible. The module $G$ is \emph{reduced }if it has no nontrivial
divisible submodule or, equivalently, $G^{\alpha }=0$ for some ordinal $%
\alpha $. When $G$ is countable and reduced, $G$ has Ulm length at most $1$
if and only if $G$ is a countable direct sum of cyclic torsion modules \cite[%
Theorem 17.3]{fuchs_infinite_1970}. Furthermore, we have that $G$ is
countable and divisible if and only if it is a countable direct sum of
copies of $R\left( p^{\infty }\right) $ \cite[Theorem 23.1]%
{fuchs_infinite_1970}.

\subsection{Towers}

Suppose that $\boldsymbol{T}=\left( T^{\left( n\right) }\right) $ is a tower
of countable modules with bonding maps $p^{\left( n,n+1\right) }:T^{\left(
n+1\right) }\rightarrow T^{\left( n\right) }$. The tower $\boldsymbol{T}$ is:

\begin{itemize}
\item \emph{reduced }if $\mathrm{lim}\boldsymbol{T}=0$;

\item \emph{monomorphic} if its bonding maps $p^{\left( n,n+1\right) }$ are
monomorphisms;

\item \emph{essentially monomorphic} if it is isomorphic to a monomorphic
tower.
\end{itemize}

The notion of (essentially) epimorphic tower is defined in similar fashion.
The \emph{derived tower }$\boldsymbol{T}_{1}$ is defined by letting $%
T_{1}^{\left( n\right) }$ be the intersection of the images of $%
p^{(n,n+k)}:T^{\left( n+k\right) }\rightarrow T^{\left( n\right) }$ for $%
k\in \omega $. More generally, one defines by recursion $T_{0}^{\left(
n\right) }=T^{\left( n\right) }$ and, for an arbitrary countable successor
ordinal $\alpha $, $T_{\alpha }^{\left( n\right) }$ to be the intersection
of the images of 
\begin{equation*}
p^{(n,n+k)}:T_{\left( \alpha -1\right) _{n+k}}^{\left( n+k\right)
}\rightarrow T^{\left( n\right) }
\end{equation*}%
for $k\in \omega $. Then $\boldsymbol{T}_{\alpha }$ is the tower $(T_{\alpha
_{n}}^{\left( n\right) })$ \cite[Section 7.2]{casarosa_projective_2025}. The
(plain) \emph{length} of a reduced tower $\boldsymbol{T}$ is the least $%
\alpha <\omega _{1}$ such that $\boldsymbol{T}_{\alpha }\cong 0$ (and $%
\boldsymbol{T}_{\alpha -1}$ is essentially monomorphic) \cite[Definition 7.3]%
{casarosa_projective_2025}. This coincides with the (plain) \emph{Solecki
length} of $\mathrm{lim}^{1}\boldsymbol{T}$ \cite[Corollary 7.8]%
{casarosa_projective_2025}. We also recall that the functor $\mathrm{lim}%
^{1} $ from reduced towers of countable modules to modules (with a Polish
cover) is faithful \cite[Proposition 9.25]{casarosa_projective_2025}.

\begin{lemma}
Let $K$ be a field, and $\boldsymbol{T}$ be a reduced tower of vector spaces
over $K$. Then $\boldsymbol{T}$ is essentially monomorphic.
\end{lemma}

\begin{proof}
For $n\in \omega $ we can write $T^{\left( n\right) }=C^{\left( n\right)
}\oplus Z^{\left( n\right) }$ where 
\begin{equation*}
Z^{\left( n\right) }:=\mathrm{\mathrm{Ker}}\left( p^{\left( n,n+1\right)
}:T^{\left( n+1\right) }\rightarrow T^{\left( n\right) }\right) \text{.}
\end{equation*}%
Thus, we have a short exact sequence of towers%
\begin{equation*}
\boldsymbol{Z}\rightarrow \boldsymbol{T}\rightarrow \boldsymbol{C}
\end{equation*}%
where 
\begin{equation*}
\mathrm{lim}^{1}\boldsymbol{Z}\cong \mathrm{\mathrm{\mathrm{\mathrm{li}}}m}{}%
\boldsymbol{Z}\cong 0\text{,}
\end{equation*}%
Considering the six-term exact sequence relating $\mathrm{lim}$ and $\mathrm{%
lim}^{1}$, we have an induced isomorphism%
\begin{equation*}
\mathrm{lim}^{1}\boldsymbol{T}\cong \mathrm{lim}^{1}\boldsymbol{C}\text{.}
\end{equation*}%
Thus, $\boldsymbol{T}$ is isomorphic to the monomorphic tower $\boldsymbol{C}
$.
\end{proof}

\begin{lemma}
\label{Lemma:plain-on-plain}Suppose that $\left\{ 0\right\} =G_{0}\subseteq
G_{1}\subseteq \cdots \subseteq G_{n}$ is a sequence of modules with a
Polish cover such that for every $i<n$, $G_{i+1}/G_{i}$ has plain length at
most $1$. Then $G_{n}$ has plain length at most $1$.
\end{lemma}

\begin{proof}
This follows from \cite[Lemma 13.13]{casarosa_projective_2025}(2) by
induction on $n\in \omega $.
\end{proof}

\begin{lemma}
\label{Lemma:uniformly-bounded-tower}Suppose that $\ell \in \omega $ and $%
\boldsymbol{T}$ is a reduced tower of $R/p^{\ell }R$-modules. Then $%
\boldsymbol{T}$ has plain length at most $1$.
\end{lemma}

\begin{proof}
By induction on $\ell $. For $\ell =1$ this follows from the case of vector
spaces. Suppose that the conclusion holds for $\ell $. Let $\boldsymbol{T}$
be a reduced tower of $R/p^{\left( \ell +1\right) }R$-modules. Then we have
a short exact sequence of towers%
\begin{equation*}
p^{\ell }\boldsymbol{T}\rightarrow \boldsymbol{T}\rightarrow \boldsymbol{T}%
/p^{\ell }\boldsymbol{T}\text{.}
\end{equation*}%
By the inductive hypothesis, we have that $\mathrm{lim}^{1}\boldsymbol{T}%
/p^{\ell }\boldsymbol{T}$ has plain rank at most $\ell $. Furthermore, since 
$p^{\ell }\boldsymbol{T}$ is a tower of $R/pR$-vector spaces, $\mathrm{lim}%
^{1}p^{\ell }\boldsymbol{T}$ has plain rank at most $1$. It follows from
Lemma \ref{Lemma:plain-on-plain}, \cite[Lemma 5.9]{casarosa_projective_2025}%
, and the six-term exact sequence relating $\mathrm{lim}$ and $\mathrm{lim}%
^{1}$, that $\mathrm{lim}^{1}\boldsymbol{T}$ has plain length at most $1$.
\end{proof}

\begin{lemma}
Let $\boldsymbol{T}$ be a reduced tower of countable bounded modules. Fix $%
k\in \omega $. Then $p^{k}(\mathrm{lim}^{1}\boldsymbol{T})$ is $\boldsymbol{%
\Sigma }_{2}^{0}$ in $\mathrm{lim}^{1}\boldsymbol{T}$.
\end{lemma}

\begin{proof}
We have that $p^{k}\mathrm{lim}^{1}\boldsymbol{T}$ is the image of $\mathrm{%
lim}^{1}\boldsymbol{T}$ under the homomorphism induced by the morphism of
towers 
\begin{equation*}
\boldsymbol{T}\rightarrow \boldsymbol{T},\left( x_{n}\right) \mapsto \left(
p^{k}x_{n}\right) .
\end{equation*}%
\ This is also the image of 
\begin{equation*}
\mathrm{lim}_{n}^{1}\frac{T^{\left( n\right) }}{T^{\left( n\right) }[p^{k}]}
\end{equation*}%
under the morphism of towers 
\begin{equation*}
\varphi :\left( \frac{T^{\left( n\right) }}{T^{\left( n\right) }[p^{k}]}%
\right) _{n\in \omega }\rightarrow \boldsymbol{T}\text{, }\left(
x_{n}+T^{\left( n\right) }[p^{k}]\right) _{n\in \omega }\mapsto \left(
p^{k}x_{n}\right) _{n\in \omega }\text{.}
\end{equation*}%
Consider the short exact sequence of towers%
\begin{equation*}
0\rightarrow \left( \frac{T^{\left( n\right) }}{T^{\left( n\right) }[p^{k}]}%
\right) _{n\in \omega }\overset{\varphi }{\rightarrow }\boldsymbol{T}%
\rightarrow \left( \frac{T^{\left( n\right) }}{p^{k}T^{\left( n\right) }}%
\right) _{n\in \omega }\rightarrow 0\text{.}
\end{equation*}%
This induces an exact sequence%
\begin{equation*}
\mathrm{lim}_{n}^{1}\frac{T^{\left( n\right) }}{T^{\left( n\right) }[p^{k}]}%
\rightarrow \mathrm{lim}^{1}\boldsymbol{T}\rightarrow \mathrm{lim}_{n}^{1}%
\frac{T^{\left( n\right) }}{p^{k}T^{\left( n\right) }}\rightarrow 0\text{.}
\end{equation*}%
Since%
\begin{equation*}
\left( \frac{T^{\left( n\right) }}{p^{k}T^{\left( n\right) }}\right) _{n\in
\omega }
\end{equation*}%
is a tower of $R/p^{k}R$-modules, it has plain length at most $1$. Thus, the
image of 
\begin{equation*}
\mathrm{lim}_{n}^{1}\frac{T^{\left( n\right) }}{T^{\left( n\right) }[p^{k}]}%
\rightarrow \mathrm{lim}^{1}\boldsymbol{T}
\end{equation*}%
is $\boldsymbol{\Sigma }_{2}^{0}$ in $\mathrm{lim}^{1}\boldsymbol{T}$ by 
\cite[Lemma 5.9]{casarosa_projective_2025}, and the conclusion follows.
\end{proof}

\begin{lemma}
Let $\boldsymbol{T}$ be a reduced tower of countable bounded modules. Then 
\begin{equation*}
u_{1}\mathrm{lim}^{1}\boldsymbol{T}=s_{1}\mathrm{lim}^{1}\boldsymbol{T}\text{%
.}
\end{equation*}
\end{lemma}

\begin{proof}
Suppose that $\left( a_{i}\right) $ represents an element of $u_{1}\mathrm{%
lim}^{1}\boldsymbol{T}$. Fix $n\in \omega $.\ Let $k\in \omega $ be such
that $p^{k}T^{\left( i\right) }=0$ for $i\leq n$. Since $\left( a_{n}\right) 
$ represents an element of $u_{1}\mathrm{lim}^{1}\boldsymbol{T}$ there exist 
$c_{i},b_{i}\in T^{\left( i\right) }$ such that 
\begin{equation*}
c_{i}-p^{\left( i,i+1\right) }c_{i+1}+p^{k}b_{i}=a_{i}
\end{equation*}%
for $i\in \omega $. In particular, 
\begin{equation*}
c_{i}-p^{\left( i,i+1\right) }c_{i+1}=a_{i}
\end{equation*}%
for $i\leq n$. By \cite[Lemma 7.10]{casarosa_projective_2025} this shows
that $\left( a_{i}\right) $ represents an element of $s_{1}\mathrm{lim}^{1}%
\boldsymbol{T}$. Thus, $u_{1}\mathrm{lim}^{1}\boldsymbol{T}$ is contained in 
$s_{1}\mathrm{lim}^{1}\boldsymbol{T}$. In order to establish the other
inclusion, since $s_{1}\mathrm{lim}^{1}\boldsymbol{T}$ is the smallest $%
\boldsymbol{\Pi }_{3}^{0}$ submodule with a Polish cover of $\mathrm{lim}^{1}%
\boldsymbol{T}$, it suffices to prove that $u_{1}\mathrm{lim}^{1}\boldsymbol{%
T}$ is $\boldsymbol{\Pi }_{3}^{0}$ in $\mathrm{\mathrm{li}m}^{1}\boldsymbol{T%
}$.\ Since%
\begin{equation*}
u_{1}\mathrm{lim}^{1}\boldsymbol{T}=\bigcap_{k\in \omega }p^{k}\mathrm{lim}%
^{1}\boldsymbol{T}\text{,}
\end{equation*}%
the conclusion follows from the previous lemma.
\end{proof}

\begin{theorem}
\label{Theorem:towers}Let $\boldsymbol{T}$ be a tower of countable \emph{%
bounded} modules. Fix $\alpha <\omega _{1}$. Then 
\begin{equation*}
u_{\alpha }\mathrm{lim}^{1}\boldsymbol{T}=s_{\alpha }\mathrm{lim}^{1}%
\boldsymbol{T}
\end{equation*}%
and the inclusion $\boldsymbol{T}_{\alpha }\subseteq \boldsymbol{T}$ induces
an isomorphism%
\begin{equation*}
\mathrm{lim}^{1}\boldsymbol{T}_{\alpha }\cong s_{\alpha }\mathrm{lim}^{1}%
\boldsymbol{T}\text{.}
\end{equation*}
\end{theorem}

\begin{proof}
Without loss of generality we can assume that $\boldsymbol{T}$ is reduced.
The second assertion is established in \cite[Theorem 7.6]%
{casarosa_projective_2025} when $\boldsymbol{T}$ is an arbitrary tower of
countable modules. We prove that the first assertion holds by induction on $%
\alpha $.\ When $\alpha $ is limit, the conclusion follows from the
inductive hypothesis. Let us thus assume that $\alpha $ is a countable
successor ordinal. Considering the isomorphism%
\begin{equation*}
s_{\alpha -1}\mathrm{lim}^{1}\boldsymbol{T}\cong \mathrm{lim}^{1}\boldsymbol{%
T}_{\alpha -1}
\end{equation*}%
after replacing $\boldsymbol{T}$ with $\boldsymbol{T}_{\alpha -1}$ we can
assume without loss of generality that $\alpha =1$. In this case, the
conclusion follows from the previous lemma.
\end{proof}

\subsection{Pure extensions\label{Section:pure}}

Let $C,A$ be countable modules. The submodule $\mathrm{PExt}\left(
C,A\right) $ of $\mathrm{Ext}\left( C,A\right) $ is defined to be the
intersection of%
\begin{equation*}
\mathrm{Ran}\left( \mathrm{Ext}\left( C/C_{0},A\right) \rightarrow \mathrm{%
Ext}\left( C,A\right) \right) =\mathrm{\mathrm{\mathrm{\mathrm{Ker}}}}\left( 
\mathrm{Ext}\left( C,A\right) \rightarrow \mathrm{Ext}\left( C_{0},A\right)
\right)
\end{equation*}%
where $C_{0}$ ranges among the finite submodules of $C$. By definition, $%
\mathrm{PExt}\left( C,A\right) $ parametrizes the extensions of $C$ by $A$
that are \emph{pure }\cite[Section 3.3.1]{rotman_introduction_2009}.
Equivalently, $\mathrm{PExt}\left( C,A\right) $ can be seen as the group $%
\mathrm{Ext}_{\mathcal{E}_{0}}\left( C,A\right) $ of admissible short exact
sequences with respect to the exact structure \emph{projectively generated}
by \emph{finite} modules \cite[Theorem 3.69]{rotman_introduction_2009}. This
coincides with the exact structure projectively generated by \emph{bounded }%
modules.

If $C=\mathrm{co\mathrm{lim}}_{n}C_{n}$, where $C_{n}$ is a bounded module
for every $n\in \omega $, then%
\begin{equation*}
\mathrm{PExt}\left( C,A\right) \cong \mathrm{lim}_{n}^{1}\mathrm{Hom}\left(
C_{n},A\right) \text{;}
\end{equation*}%
see \cite[Theorem 8.3]{casarosa_projective_2025}. By \cite[Theorem 5.3]%
{fuchs_infinite_1973}, $\mathrm{PExt}\left( C,A\right) $ coincides with the
first Ulm submodule $u_{1}\mathrm{Ext}\left( C,A\right) $. We also have that 
$\mathrm{PExt}\left( C,A\right) $ is the closure of the trivial submodule in 
$\mathrm{Ext}\left( C,A\right) $, namely the Solecki submodule $s_{0}\mathrm{%
Ext}\left( C,A\right) $ \cite[Corollary 11.6]{eilenberg_group_1942}.

When $C$ is torsion and $A$ is flat (i.e., torsion-free), $\mathrm{Ext}%
\left( C,A\right) $ is Polish, and $\mathrm{PExt}\left( C,A\right) =0$ \cite[%
Section 9.1]{casarosa_projective_2025}. If $A$ is an arbitrary countable
modules with largest torsion submodule $A_{\mathrm{t}}$, 
\begin{equation*}
\mathrm{Hom}\left( C,A/A_{\mathrm{t}}\right) \cong \mathrm{PExt}\left(
C,A/A_{\mathrm{t}}\right) \cong 0\text{.}
\end{equation*}%
Thus, the inclusion $A_{\mathrm{t}}\rightarrow A$ induces an isomorphism%
\begin{equation*}
\mathrm{PExt}\left( C,A_{\mathrm{t}}\right) \cong \mathrm{PExt}\left(
C,A\right) \text{.}
\end{equation*}%
Thus, when $C$ is torsion, when studying pure extensions of $C$ by $A$ we
can assume without loss of generality that also $A$ is torsion. We can also
assume without loss of generality that $A$ is \emph{reduced}. Indeed,
replacing $A$ with the quotient of $A$ by its largest divisible submodule $%
\Delta A$ does not change $\mathrm{Ext}\left( C,A\right) $, by virtue of
injectivity of divisible modules.

\subsection{Higher order pure extensions}

Phantom extensions of order $\alpha <\omega _{1}$ have been introduced in 
\cite[Section 8.3, Section 9.1]{casarosa_projective_2025} as a
generalization of pure extension, which are the phantom extensions of order $%
0$. Phantom extensions of $C$ by $A$ of order $\alpha $ parametrize a
submodule $\mathrm{Ph}^{\alpha }\mathrm{Ext}\left( C,A\right) $ of $\mathrm{%
Ext}\left( C,A\right) $ defined recursively as follows: 
\begin{equation*}
\mathrm{Ph}^{0}\mathrm{Ext}\left( C,A\right) =\mathrm{PExt}\left( C,A\right) 
\text{,}
\end{equation*}%
and $\mathrm{Ph}^{\alpha }\mathrm{Ext}\left( C,A\right) $ is the
intersection of%
\begin{equation*}
\mathrm{Ran}\left( \mathrm{Ph}^{\beta }\mathrm{Ext}\left( C/C_{0},A\right)
\rightarrow \mathrm{Ext}\left( C,A\right) \right)
\end{equation*}%
for $\beta <\alpha $ and $C_{0}\subseteq C$ finite. By \cite[Theorem 8.3]%
{casarosa_projective_2025}, $\mathrm{Ph}^{\alpha }\mathrm{Ext}\left(
C,A\right) $ is equal to the Solecki submodule $s_{\alpha }\mathrm{Ext}%
\left( C,A\right) $.

Another generalization of the notion of pure extension was introduced in 
\cite{nunke_purity_1963}; see also \cite[Chapter XII]{fuchs_infinite_1973}.
An extension of $C$ by $A$ is $p^{\omega \alpha }$-pure for some ordinal $%
\alpha $ if it represents an element of $u_{\alpha }\mathrm{Ext}\left(
C,A\right) $. Thus, purity is obtained as a particular case when $\alpha =1$%
. The following result reconciles these higher-order notions of purity.

\begin{theorem}
\label{Theorem:ulm}Suppose that $C$ and $A$ are countable modules, with $C$
torsion.For $n\in \omega $ define%
\begin{equation*}
T^{\left( n\right) }:=\mathrm{Hom}\left( C[p^{n}],A\right)
\end{equation*}%
and let $\boldsymbol{T}$ be the tower $\left( T^{\left( n\right) }\right) $.
Then%
\begin{equation*}
\mathrm{PExt}\left( C,A\right) \cong \mathrm{lim}_{n}^{1}\boldsymbol{T}
\end{equation*}%
and for every $\alpha <\omega _{1}$, 
\begin{equation*}
\mathrm{Ph}^{\alpha }\mathrm{Ext}\left( C,A\right) =s_{\alpha }\mathrm{PExt}%
\left( C,A\right) =u_{\alpha }\mathrm{PExt}\left( C,A\right) =u_{1+\alpha }%
\mathrm{Ext}\left( C,A\right) \cong \mathrm{lim}_{n}^{1}\boldsymbol{T}%
_{\alpha }\text{.}
\end{equation*}%
Thus, an extension of $C$ by $A$ is phantom of order $\alpha $ if and only
if it is $p^{\omega \left( 1+\alpha \right) }$-pure.
\end{theorem}

\begin{proof}
This is an immediate consequence of the above remarks and Theorem \ref%
{Theorem:towers}.
\end{proof}

An extension 
\begin{equation*}
\mathfrak{S}:A\rightarrow X\overset{\pi }{\rightarrow }C
\end{equation*}
is pure if and only if an element $c$ of $C$ admits a lift $\hat{c}$ in $X$
of the same order. One can characterize phantom extensions of order $\alpha $
in a similar fashion, showcasing how the notion of phantom extension of
order $\alpha $ can be seen as a higher order version of purity; see \cite[%
Proposition 9.8]{casarosa_projective_2025}.

Let us say that a triple%
\begin{equation*}
(c,C_{0},\hat{C}_{0})
\end{equation*}%
where $c\in C$, $C_{0}$ is a submodule of $C$, and $\hat{C}_{0}$ is a
submodule of $X$ with $\pi \hat{C}_{0}=C_{0}$, has:

\begin{itemize}
\item a \emph{sharp lift of order }$0$ if there exists a lift $\hat{c}\in X$
of $c$ whose order modulo $\hat{C}_{0}$ is equal to the order of $c$ modulo $%
C_{0}$, i.e., the order of $\hat{c}+\hat{C}_{0}$ in the quotient $X/\hat{C}%
_{0}$ is equal to the order of $c+C_{0}$ in the quotint $C/C_{0}$;

\item a \emph{sharp lift of order }$\alpha >0$ if for every $\beta <\alpha $
there exists $\hat{c}\in X$ of $c$ that whose order modulo $\hat{C}_{0}$ is
equal to the order of $c$ modulo $C_{0}$, and for every $d\in C_{0}$, 
\begin{equation*}
(d_{0},\left\langle C_{0},c\right\rangle ,\left\langle \hat{C}_{0},\hat{c}%
\right\rangle )
\end{equation*}%
has a sharp lift of order $\beta $.
\end{itemize}

By definition, the extension $\mathfrak{S}$ is pure of order $\alpha $ if
and only if for every $c\in C_{0}$, the triple $\left( c,\left\{ 0\right\}
,\left\{ 0\right\} \right) $ has a sharp lift of order $\alpha $. It is
transparent that this recovers the usual notion of purity for $\alpha =0$.
It is proved in \cite[Proposition 9.8]{casarosa_projective_2025} that an
extension is pure of order $\alpha $ if and only if it is phantom of order $%
\alpha $.

\subsection{Projective length\label{Subsection:projective-length}}

Recall that a countable torsion module $C$ is \emph{pure-projective }\cite[%
Section 30]{fuchs_infinite_1970}\emph{\ }if $\mathrm{PExt}\left( C,A\right)
=0$ for any countable torsion module $A$. This is equivalent to the
assertion that $C$ is a countable direct sum of cyclic modules \cite[Theorem
30.2]{fuchs_infinite_1970}, as well as to the fact that it has Ulm length at
most $1$.

Generalizing this notion, we say that a countable torsion module $C$ is has
projective length at most $\alpha $, for some countable ordinal $\alpha $,
if $\mathrm{Ph}^{\alpha }\mathrm{Ext}\left( C,A\right) =0$ for every
countable torsion module $A$; see \cite[Section 9.9]%
{casarosa_projective_2025}.\ When $\alpha $ is a successor, we say that $C$
has \emph{plain projective length }at most $\alpha $ if $\left\{ 0\right\}
\in \boldsymbol{\Sigma }_{2}^{0}(\mathrm{Ph}^{\alpha -1}\mathrm{Ext}\left(
C,A\right) )$ for every countable torsion module $A$.

We recall the definition of the \textquotedblleft
universal\textquotedblright\ well-founded tree $I_{\alpha }^{\mathrm{plain}}$
of rank $\alpha $ for a successor ordinal $\alpha <\omega _{1}$, and of the
well-founded forest $I_{\alpha }$ of rank $\alpha $ for $\alpha <\omega _{1}$%
. We regard rooted trees as ordered set with respect to the relation between
nodes defined by setting $x\leq y$ if and only if $x$ belongs to the unique
path from $y$ to the root. By a forest we mean a disjoint union of rooted
trees, with the induced order. One sets $I_{1}^{\mathrm{plain}}=\left\{
0\right\} $. If $I_{\beta }^{\mathrm{plain}}$ and $I_{\beta }$ have been
defined for $\beta <\alpha $, one lets $I_{\alpha }^{\mathrm{plain}}$ to be $%
\left\{ \alpha -1\right\} \cup I_{\alpha -1}$, where $\alpha -1$ is the root
and the inclusion $I_{\alpha -1}\rightarrow I_{\alpha }^{\mathrm{plain}}$ is
order-preserving. One also lets $I_{\alpha }$ to be the disjoint union of $%
I_{\alpha _{n}}^{\mathrm{plain}}$ for $n\in \omega $. It is easily seen by
induction on $\alpha $ that $I_{\alpha }^{\mathrm{plain}}$ is a
\textquotedblleft universal\textquotedblright\ well-founded tree of rank at
most $\alpha $, in the sense that for every well-founded tree of rank at
most $\alpha $ is isomorphic to a subtree of $I_{\alpha }^{\mathrm{plain}}$
closed under initial segments. Likewise, $I_{\alpha }$ is a universal
well-founded forest of rank at most $\alpha $.

For $\alpha <\omega _{1}$, let $\mathcal{S}_{\alpha }$ be the class of
countable torsion modules of plain projective length at most $\alpha $. We
define $\mathcal{E}_{\alpha }$ to be the exact structure on the class $%
\mathbf{Tor}\left( R\right) $ of torsion modules projectively generated by $%
\mathcal{S}_{\alpha _{n}}$ for $n<\omega $. The arguments in \cite[Section 10%
]{casarosa_projective_2025} apply verbatim to the context of torsion
modules, yielding the following characterization of countable torsion
modules of projective length at most $\alpha $.

\begin{proposition}
\label{Proposition:projective-length}Let $C$ be a countable torsion module,
and $\alpha <\omega _{1}$. The following assertions are equivalent:

\begin{enumerate}
\item $C$ has projective length at most $\alpha $;

\item $C$ is a direct summand of a colimit of a presheaf of finite torsion
modules over $I_{1+\alpha }$;

\item $C$ is a direct summand of a colimit of a presheaf of finite torsion
modules over a countable well-founded forest of rank at most $1+\alpha $;

\item $C$ is $\mathcal{E}_{\alpha }$-projective.
\end{enumerate}
\end{proposition}

The same arguments as in \cite[Section 10]{casarosa_projective_2025} also
produce projective resolutions in the exact category $\left( \mathbf{Tor}%
\left( R\right) ,\mathcal{E}_{\alpha }\right) $.

\begin{theorem}
\label{Theorem:projective-length}Fix $\alpha <\omega _{1}$. Then the exact
category $\left( \mathbf{Tor}\left( R\right) ,\mathcal{E}_{\alpha }\right) $
is hereditary with enough projectives. The functor $\mathrm{Ph}^{\alpha }%
\mathrm{Ext}$ on $\mathbf{Tor}\left( R\right) $ is the first cohomological
right derived functor $\mathrm{Ext}_{\mathcal{E}_{\alpha }}^{1}$ of $\mathrm{%
Hom}$ in the category $\left( \mathbf{Tor}\left( R\right) ,\mathcal{E}%
_{\alpha }\right) $.
\end{theorem}

\section{Homological lemmas\label{Section:homological}}

In this section we continue to assume that $R$ is a countable DVR and all
modules are $R$-modules. We recall some classical homological terminology
and lemmas, particularly from the seminal paper \cite{nunke_homology_1967},
to be used in the proofs of our main results.

\subsection{Radicals and cotorsion functors\label{Subsection:cotorsion}}

A \emph{preradical }or \emph{subfunctor of the identity }$S$ is a function $%
A\mapsto SA$ assigning to each module $A$ a submodule $SA\subseteq A$ such
that, if $f:A\rightarrow B$ is a homomorphism, then $f$ maps $SA$ to $SB$.
This gives a functor from the category of modules to itself, where one
defines for a homomorphism $f:A\rightarrow B$, $Sf:=f|_{SA}:SA\rightarrow SB$%
. A \emph{radical }is a preradical $S$ such that $S(A/SA)=0$ for every group 
$A$.

An extension $R\rightarrow G\rightarrow H$ of a \emph{countable} module $H$
by $R$ defines a preradical $S$, by setting%
\begin{equation*}
SA=\mathrm{Ran}\left( \mathrm{Hom}\left( G,A\right) \rightarrow \mathrm{Hom}%
\left( R,A\right) =A\right) =\mathrm{\mathrm{Ker}}\left( A=\mathrm{Hom}%
\left( R,A\right) \rightarrow \mathrm{Ext}\left( H,A\right) \right) \text{.}
\end{equation*}%
In this case, one says that $S$ is the preradical \emph{represented} by the
extension $R\rightarrow G\rightarrow H$. A \emph{cotorsion functor }is a
preradical that is represented by an extension $R\rightarrow G\rightarrow H$
such that $H$ is a countable torsion module. Notice that, as $H$ is
countable, this implies that $SA$ is a submodule with a Polish cover of $A$
whenever $A$ is a module with a Polish cover.

Suppose that $S$ is a cotorsion functor. An extension $C\rightarrow
E\rightarrow A$ is $S$-pure if it defines an element of $S\mathrm{Ext}\left(
A,C\right) $. In this case, the map $C\rightarrow E$ is an $S$-pure
injective homomorphism and the map $E\rightarrow A$ is an $S$-pure
surjective homomorphism, and the image of $C$ inside of $E$ is an $S$-pure
subgroup. A module $A$ is $S$-projective if $S\mathrm{Ext}\left( A,C\right)
=0$ for every group $C$, and $S$-injective if $S\mathrm{Ext}\left(
C,A\right) =0$ for every group $C$. The cotorsion functor $S$ \emph{has
enough projectives }if for every module $A$ there exists an $S$-pure
extension $M\rightarrow P\rightarrow A$ where $P$ is $S$-projective. This is
equivalent to the assertion that $S$ is represented by an extension $%
R\rightarrow G\rightarrow H$ where $H$ is an $S$-projective torsion module 
\cite[Theorem 4.8]{nunke_purity_1963}. A cotorsion functor with enough
projectives is necessarily a radical.

The following lemma is a consequence of \cite[Lemma 1.1]{nunke_homology_1967}%
.

\begin{lemma}
\label{Lemma:exact-functor}Suppose that $S$ is a cotorsion functor, $A,C$
are countable modules, and $B$ is a submodule of $C$ contained in $SC$. Then
the exact sequence $B\rightarrow C\rightarrow C/B$ induces an exact sequence%
\begin{equation*}
0\rightarrow \mathrm{Hom}\left( A,B\right) \rightarrow \mathrm{Hom}%
(A,C)\rightarrow \mathrm{Hom}\left( A,C/B\right) \rightarrow \mathrm{Ext}%
\left( A,B\right) \rightarrow S\mathrm{Ext}(A,C)\rightarrow S\mathrm{Ext}%
\left( A,C/B\right) \rightarrow 0
\end{equation*}%
of modules with a Polish cover. Furthermore, $S\mathrm{Ext}(A,C)$ is the
preimage of $S\mathrm{Ext}\left( A,C/B\right) $ under the surjective
homomorphism $\mathrm{Ext}\left( A,C\right) \rightarrow \mathrm{Ext}\left(
A,C/B\right) $.
\end{lemma}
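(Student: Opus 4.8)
The plan is to derive the stated sequence from the full long exact sequence of $\mathrm{Hom}$ and $\mathrm{Ext}$ by truncating at the two rightmost terms, and then to promote the resulting group-level exactness to exactness in the category $\mathcal{M}$ of groups with a Polish cover. First I would feed the short exact sequence $B\rightarrow C\rightarrow C/B$ of countable (hence Polish) groups into the Borel-definable long exact sequence of Subsection~\ref{Section:Ext-Hom}, obtaining the Borel-definable exact sequence
\begin{equation*}
0\rightarrow \mathrm{Hom}(A,B)\rightarrow \mathrm{Hom}(A,C)\rightarrow \mathrm{Hom}(A,C/B)\overset{\partial }{\rightarrow }\mathrm{Ext}(A,B)\overset{\iota _{*}}{\rightarrow }\mathrm{Ext}(A,C)\overset{\pi _{*}}{\rightarrow }\mathrm{Ext}(A,C/B)\rightarrow 0\text{,}
\end{equation*}
where $\iota _{*}$ and $\pi _{*}$ are induced by the inclusion $\iota :B\rightarrow C$ and the quotient map $\pi :C\rightarrow C/B$. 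The target sequence is obtained from this one by replacing the last two $\mathrm{Ext}$ groups by the Polishable subgroups $S\mathrm{Ext}(A,C)$ and $S\mathrm{Ext}(A,C/B)$; recall that $S$ sends groups with a Polish cover to Polishable subgroups precisely because $H$ is countable.

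The algebraic heart of the argument is the final clause, i.e.\ the identity $S\mathrm{Ext}(A,C)=\pi _{*}^{-1}(S\mathrm{Ext}(A,C/B))$, from which everything else follows formally. One inclusion, $S\mathrm{Ext}(A,C)\subseteq \pi _{*}^{-1}(S\mathrm{Ext}(A,C/B))$, is immediate since $S$ is a subfunctor of the identity and hence $\pi _{*}$ carries $S\mathrm{Ext}(A,C)$ into $S\mathrm{Ext}(A,C/B)$. The reverse inclusion $\pi _{*}^{-1}(S\mathrm{Ext}(A,C/B))\subseteq S\mathrm{Ext}(A,C)$ is the one place where the hypothesis $B\subseteq SC$ is genuinely used, and it is exactly the content of \cite[Lemma~1.1]{nunke_homology_1967}. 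I expect this to be the main obstacle, as it is the only step that is not purely formal: it expresses that an element of $\mathrm{Ext}(A,C)$ whose image in $\mathrm{Ext}(A,C/B)$ is $S$-pure must itself be $S$-pure, which relies on the specific structure of the cotorsion functor $S$ together with $B\subseteq SC$.

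Granting this identity, I would read off the remaining exactness at the level of underlying groups. Since $\mathrm{im}(\iota _{*})=\ker (\pi _{*})\subseteq \pi _{*}^{-1}(S\mathrm{Ext}(A,C/B))=S\mathrm{Ext}(A,C)$, the map $\iota _{*}$ corestricts to a homomorphism $\mathrm{Ext}(A,B)\rightarrow S\mathrm{Ext}(A,C)$ with the same image; as $S\mathrm{Ext}(A,C)\hookrightarrow \mathrm{Ext}(A,C)$ is injective this corestriction has kernel $\ker (\iota _{*})=\mathrm{im}(\partial )$, giving exactness at $\mathrm{Ext}(A,B)$. Exactness at $S\mathrm{Ext}(A,C)$ follows from $\ker (\pi _{*})\cap S\mathrm{Ext}(A,C)=\mathrm{im}(\iota _{*})$, and surjectivity onto $S\mathrm{Ext}(A,C/B)$ follows by applying $\pi _{*}$ to the identity $S\mathrm{Ext}(A,C)=\pi _{*}^{-1}(S\mathrm{Ext}(A,C/B))$ and using that $\pi _{*}$ is surjective. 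The first four terms are unchanged, so their exactness is inherited directly from the long exact sequence above.

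Finally, I would upgrade group-level exactness to exactness in $\mathcal{M}$. Every map in the truncated sequence is Borel-definable: the maps of the long exact sequence are Borel-definable by Subsection~\ref{Section:Ext-Hom}, the inclusions $S\mathrm{Ext}(A,C)\hookrightarrow \mathrm{Ext}(A,C)$ and $S\mathrm{Ext}(A,C/B)\hookrightarrow \mathrm{Ext}(A,C/B)$ are Borel-definable monomorphisms because their sources are Polishable subgroups, and consequently the corestriction of $\iota _{*}$ and the restriction of $\pi _{*}$ to these subgroups are Borel-definable. In $\mathcal{M}$ the kernel and image of a Borel-definable homomorphism are the set-theoretic kernel and image equipped with their Polishable group topology, and this topology is unique; hence two subobjects that coincide as subsets coincide as subobjects, so the group-level exactness established above is exactly exactness in $\mathcal{M}$. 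This yields the Borel-definable exact sequence together with the final clause, as desired.
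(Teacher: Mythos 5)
Your proposal is correct and follows essentially the same route as the paper, which offers no argument beyond the one-line attribution to Nunke's Lemma 1.1: you delegate to that same lemma the only non-formal step, namely the identity $S\mathrm{Ext}(A,C)=\pi_{*}^{-1}\left(S\mathrm{Ext}\left(A,C/B\right)\right)$, which is precisely where the hypothesis $B\subseteq SC$ is used. The remaining work in your write-up (truncating the Borel-definable long exact $\mathrm{Hom}$--$\mathrm{Ext}$ sequence, corestricting $\iota_{*}$, and noting that exactness in the category of groups with a Polish cover is just group-level exactness with Borel-definable maps, since kernels and images there are the set-theoretic ones with their unique Polishable topologies) is the routine bookkeeping the paper leaves implicit, and you carry it out correctly.
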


Suppose that $S$ is a cotorsion functor represented by the exact sequence $%
R\rightarrow G_{S}\rightarrow H_{S}$, where $H_{S}$ is a countable $S$%
-projective torsion module. If $C$ is a countable module, then we have a
corresponding exact sequence of modules with a Polish cover%
\begin{equation*}
0\rightarrow SC\rightarrow C\rightarrow \mathrm{Ext}\left( H_{S},C\right)
\rightarrow \mathrm{Ext}\left( G_{S},C\right) \rightarrow 0\text{.}
\end{equation*}%
The proofs of \cite[Lemma 1.3, Lemma 1.4, and Theorem 1.11]%
{nunke_homology_1967} give the following lemma.

\begin{lemma}
\label{Lemma:short-exact-functor}Suppose that $S$ is a cotorsion functor
represented by an extension $R\rightarrow G_{S}\rightarrow H_{S}$ where $%
H_{S}$ is a countable $S$-projective torsion module. Let $A,C$ be a
countable modules such that $A/SA$ is $S$-projective. Then:

\begin{enumerate}
\item The quotient map $C\rightarrow C/SC$ induces an isomorphisms 
\begin{equation*}
\mathrm{Ext}\left( G_{S},C\right) \rightarrow \mathrm{Ext}\left(
G_{S},C/SC\right)
\end{equation*}
and 
\begin{equation*}
\mathrm{Ext}\left( H_{S},C\right) \rightarrow \mathrm{Ext}\left(
H_{S},C/SC\right) \text{;}
\end{equation*}

\item If $SC=0$ then $S\mathrm{Ext}\left( A,C\right) $ and $\mathrm{Hom}%
\left( SA,\mathrm{Ext}\left( G_{S},C\right) \right) =\mathrm{Hom}\left( SA,S%
\mathrm{Ext}\left( G_{S},C\right) \right) $ are isomorphic Polish modules;

\item The short exact sequences $SA\rightarrow A\rightarrow A/SA$ and $%
SC\rightarrow C\rightarrow C/SC$ induce homomorphisms 
\begin{equation*}
\mathrm{Hom}(A,C/SC)\rightarrow \mathrm{Ext}(A,SC)
\end{equation*}%
and 
\begin{equation*}
\mathrm{Ext}\left( A,SC\right) \rightarrow \mathrm{Ext}\left( SA,SC\right) 
\text{,}
\end{equation*}%
which induce an isomorphism of modules with a Polish cover

\begin{equation*}
\eta _{A,C}:\frac{\mathrm{Ext}\left( A,SC\right) }{\mathrm{Ran}\left( 
\mathrm{Hom}\left( A,C/SC\right) \rightarrow \mathrm{Ext}\left( A,SC\right)
\right) }\rightarrow \mathrm{Ext}\left( SA,SC\right) \text{;}
\end{equation*}

\item The short exact sequence $SC\rightarrow C\rightarrow C/SC$ induces an
exact sequence%
\begin{equation*}
0\rightarrow \mathrm{Hom}\left( A,C/SC\right) \rightarrow \mathrm{Ext}\left(
A,SC\right) \rightarrow S\mathrm{Ext}\left( A,C\right) \rightarrow S\mathrm{%
Ext}\left( A,C/SC\right) \rightarrow 0
\end{equation*}%
which induces a \emph{pure} short exact sequence of modules with a Polish
cover 
\begin{equation*}
0\rightarrow \frac{\mathrm{Ext}\left( A,SC\right) }{\mathrm{Ran}\left( 
\mathrm{Hom}\left( A,C/SC\right) \rightarrow \mathrm{Ext}\left( A,SC\right)
\right) }\overset{\rho _{A,C}}{\rightarrow }S\mathrm{Ext}\left( A,C\right)
\rightarrow S\mathrm{Ext}\left( A,C/SC\right) \rightarrow 0\text{;}
\end{equation*}

\item The injective homomorphism%
\begin{equation*}
r_{A,C}:=\rho _{A,C}\circ \eta _{A,C}^{-1}:\mathrm{Ext}\left( SA,SC\right)
\rightarrow S\mathrm{Ext}\left( A,C\right)
\end{equation*}%
restricts to an isomorphism%
\begin{equation*}
\gamma _{A,C}:\mathrm{PExt}\left( SA,SC\right) \rightarrow u_{1}\left( S%
\mathrm{Ext}\left( A,C\right) \right)
\end{equation*}%
of modules with a Polish cover;

\item We have a \emph{pure} exact sequence of modules with a Polish cover%
\begin{equation*}
0\rightarrow \mathrm{Ext}\left( A^{\alpha },C^{\alpha }\right) \overset{%
r_{A,C}^{\alpha }}{\rightarrow }\mathrm{Ext}\left( A,C\right) ^{\alpha
}\rightarrow \mathrm{Hom}\left( A^{\alpha },E_{\omega \alpha }\left(
C\right) \right) \rightarrow 0\text{;}
\end{equation*}
\end{enumerate}
\end{lemma}

\subsection{Completions\label{Subsection:completion}}

Let $G$ be a module with a Polish cover. For some $\alpha <\omega _{1}$, $%
\alpha $-topology on $G$ is the group topology that has $\left( G^{\beta
}\right) _{\beta <\alpha }$ as neighborhoods of $0$. Let $L_{\alpha }\left(
G\right) :=\mathrm{lim}_{\beta <\alpha }G/G^{\beta }$ be the corresponding
Hausdorff completion, which is a Polish module. We have a canonical exact
sequence of modules with a Polish cover 
\begin{equation*}
0\rightarrow G^{\alpha }\rightarrow G\overset{\kappa _{\alpha }}{\rightarrow 
}L_{\alpha }\left( G\right) \rightarrow E_{\alpha }\left( G\right)
\rightarrow 0\text{.}
\end{equation*}%
The map $\kappa _{\alpha }:G\rightarrow L_{\alpha }\left( G\right) $ is the
homomorphism induced by the quotient maps $G\rightarrow G/G^{\beta }$ for $%
\beta <\alpha $, while 
\begin{equation*}
E_{\alpha }\left( G\right) :=\mathrm{coker}\left( \kappa _{\alpha }\right)
=L_{\alpha }\left( G\right) /\kappa _{\alpha }\left( G\right) \text{.}
\end{equation*}
Notice that the $\alpha $-topology on $G$ is Hausdorff if and only if $%
G^{\alpha }=0$, and it is Hausdorff and complete if and only if $G^{\alpha
}=0$ and $E_{\alpha }\left( G\right) =0$, in which case $G\rightarrow
L_{\alpha }\left( G\right) $ is an isomorphism.

\begin{definition}
\label{Definition:isotype}If $G,H$ are modules with a Polish cover, and $%
\varphi :G\rightarrow H$ is an injective homomorphism, then $\varphi $ is 
\emph{isotype }if $\varphi ^{-1}\left( u_{\alpha }\left( H\right) \right)
=u_{\alpha }\left( G\right) $ for every ordinal $\alpha $. If $G\subseteq H$
is a submodule with a Polish cover, then $G$ is isotype if the inclusion $%
G\rightarrow H$ is isotype.
\end{definition}

If $\varphi :G\rightarrow H$ is an isotype homomorphism of modules with a
Polish cover, then it induces a homomorphism $L_{\alpha }\left( G\right)
\rightarrow L_{\alpha }\left( H\right) $ of Polish modules that makes the
diagram%
\begin{equation*}
\begin{array}{ccc}
G & \rightarrow & H \\ 
\downarrow &  & \downarrow \\ 
L_{\alpha }\left( G\right) & \rightarrow & L_{\alpha }\left( H\right)%
\end{array}%
\end{equation*}%
commute. In turn, this induces a homomorphism $E_{\alpha }\left( G\right)
\rightarrow E_{\alpha }\left( H\right) $ of modules with a Polish cover.

\subsection{Ulm cotorsion functors}

For every ordinal $\alpha <\omega _{1}$, the assignment $A\mapsto p^{\alpha
}A$ defines on countable torsion modules a cotorsion functor with enough
projectives \cite[Chapter V]{griffith_infinite_1970}. For example, $A\mapsto
p^{\omega }A$ is the cotorsion functor represented by the extension $%
R\rightarrow R[1/p]\rightarrow R\left( p^{\infty }\right) $. More generally,
for $\alpha <\omega _{1}$, $A\mapsto A^{\alpha }$ is represented by the
extension $R\rightarrow G_{\alpha }\rightarrow H_{\alpha }$ as in \cite[%
Theorem 69]{griffith_infinite_1970}, where $H_{\alpha }$ is a countable
module of Ulm length $\alpha $. A torsion module $A$ is called \emph{totally
projective }if and only if $A/p^{\alpha }A$ is $p^{\alpha }$-projective for
every ordinal $\alpha $ \cite[Section 82]{fuchs_infinite_1973}. Every
countable torsion module is totally projective \cite[Theorem 82.4]%
{fuchs_infinite_1973}. For a countable module $C$ and limit ordinal $\alpha
<\omega _{1}$, define as in the previous section 
\begin{equation*}
L_{\alpha }\left( C\right) =\mathrm{lim}_{\beta <\alpha }C/p^{\beta }C\text{,%
}
\end{equation*}%
and consider the Borel-definable exact sequence 
\begin{equation*}
0\rightarrow p^{\alpha }C\rightarrow C\rightarrow L_{\alpha }\left( C\right)
\rightarrow E_{\alpha }\left( C\right) \rightarrow 0\text{.}
\end{equation*}

\begin{lemma}
\label{Lemma:Keef}Suppose that $A,C$ are countable $p$-groups, and $\alpha
<\omega _{1}$ is a limit ordinal. Then $\mathrm{Hom}\left( A,E_{\alpha
}\left( C\right) \right) $ and $\mathrm{Hom}\left( A,p^{\alpha }\mathrm{Ext}%
\left( G_{\alpha },C\right) \right) $ are isomorphic groups with a Polish
cover.
\end{lemma}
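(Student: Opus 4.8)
The plan is to play the two canonical four-term exact sequences attached to $C$ against each other. Write $S$ for the cotorsion functor $A\mapsto p^{\alpha}A$, represented by the extension $\mathbb{Z}\to G_{\alpha}\to H_{\alpha}$ with $H_{\alpha}$ a countable (hence totally projective) $p$-group of length $\alpha$. The fundamental exact sequence of $S$ reads
\[
0\to p^{\alpha}C\to C\to \mathrm{Ext}(H_{\alpha},C)\to \mathrm{Ext}(G_{\alpha},C)\to 0,
\]
while the $\alpha$-topology gives
\[
0\to p^{\alpha}C\to C\xrightarrow{\kappa_{\alpha}} L_{\alpha}(C)\to E_{\alpha}(C)\to 0.
\]
Both start with the same inclusion $p^{\alpha}C\to C$. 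Since all the groups in sight are countable $p$-groups, they are totally projective, so Lemma~\ref{Lemma:short-exact-functor} applies; in particular part~(1) gives $\mathrm{Ext}(G_{\alpha},C)\cong\mathrm{Ext}(G_{\alpha},C/p^{\alpha}C)$ and $\mathrm{Ext}(H_{\alpha},C)\cong\mathrm{Ext}(H_{\alpha},C/p^{\alpha}C)$, and one checks directly that $E_{\alpha}(C)=E_{\alpha}(C/p^{\alpha}C)$. I would therefore first replace $C$ by $C/p^{\alpha}C$ and so assume $p^{\alpha}C=0$, after which both sequences become \emph{short} exact with the same left-hand term $C$.

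The heart of the argument is to compare the two cokernels. Writing $H_{\alpha}$ as the increasing union of its finite subgroups $F_{i}$ and using that $\mathrm{Ext}(-,C)$ carries this colimit to an inverse limit, I would produce a natural Borel-definable surjection $\mathrm{Ext}(H_{\alpha},C)\twoheadrightarrow L_{\alpha}(C)$ lying over $C$, whose kernel is the divisible group $V:=\mathrm{PExt}(H_{\alpha},C)=u_{1}(\mathrm{Ext}(H_{\alpha},C))$ identified with the correction term $\varprojlim^{1}\mathrm{Hom}(F_{i},C)$. Dividing out the copy of $C$ and using the fundamental sequence, this yields a Borel-definable exact sequence
\[
0\to V\to \mathrm{Ext}(G_{\alpha},C)\to E_{\alpha}(C)\to 0
\]
with $V$ divisible. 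Next I would record that $E_{\alpha}(C)$ is $p^{\alpha}$-divisible: the image of $C$ is dense in the complete group $L_{\alpha}(C)$ and $L_{\alpha}(C)/p^{\beta}L_{\alpha}(C)\cong C/p^{\beta}C$ for $\beta<\alpha$, so $E_{\alpha}(C)=L_{\alpha}(C)/\kappa_{\alpha}(C)$ is divisible by each $p^{\beta}$. Since $V$, being divisible, already lies in $p^{\alpha}\mathrm{Ext}(G_{\alpha},C)$, the subgroup $p^{\alpha}\mathrm{Ext}(G_{\alpha},C)$ is itself an extension of $E_{\alpha}(C)$ by $V$.

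I would then apply the functor $\mathrm{Hom}(A,-)$. Because $V$ is divisible we have $\mathrm{Ext}(A,V)=0$, so the induced Borel-definable sequence collapses to
\[
0\to\mathrm{Hom}(A,V)\to\mathrm{Hom}\bigl(A,p^{\alpha}\mathrm{Ext}(G_{\alpha},C)\bigr)\to\mathrm{Hom}\bigl(A,E_{\alpha}(C)\bigr)\to 0,
\]
and the desired Borel-definable isomorphism drops out as soon as $\mathrm{Hom}(A,V)=0$.

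The hard part will be precisely this last vanishing: one must show that the divisible correction term $V=\mathrm{PExt}(H_{\alpha},C)$ is \emph{torsion-free}, so that the torsion group $A$ admits no nonzero homomorphism into it. This is the genuinely homological input — a result of Keef on the $\varprojlim^{1}$/pure-$\mathrm{Ext}$ term — and I would isolate it as a separate claim, establishing torsion-freeness of $V$ by analysing the self-map of multiplication by $p$ on the tower $\bigl(C[p^{n}]\bigr)$ and the vanishing of $\varprojlim^{1}$ of its $p$-torsion subtower (whose transition maps are zero). A secondary but necessary point is to check that every identification above is not merely an abstract isomorphism but Borel-definable; this is where the manifest functoriality of the constructions $L_{\alpha}$, $E_{\alpha}$, and of the Ulm subgroups $G\mapsto p^{\alpha}G$ as Polishable subgroups, together with the stability results of Section~\ref{Section:Solecki}, are invoked.
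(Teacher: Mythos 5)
Your reduction to $p^{\alpha}C=0$ and your closing step (applying $\mathrm{Hom}(A,-)$, using divisibility of the kernel and $\mathrm{Ext}(A,V)=0$) are fine, but the central structural claim is false in exactly the cases the lemma is really about, namely limit $\alpha>\omega$. You propose a Borel-definable surjection $\mathrm{Ext}(H_{\alpha},C)\rightarrow L_{\alpha}(C)$ lying over $C$ with kernel $V=\mathrm{PExt}(H_{\alpha},C)=u_{1}(\mathrm{Ext}(H_{\alpha},C))$, obtained from the Milnor $\varprojlim^{1}$ sequence over the finite subgroups $F_{i}$ of $H_{\alpha}$. Any surjection with kernel $u_{1}(\mathrm{Ext}(H_{\alpha},C))$ identifies its target with $\mathrm{Ext}(H_{\alpha},C)/u_{1}(\mathrm{Ext}(H_{\alpha},C))$, whose first Ulm subgroup is trivial (if $x+u_{1}(G)$ is divisible by every $n$ then $x\in nG+u_{1}(G)=nG$ for all $n$). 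But after your reduction $p^{\alpha}C=0$ the map $\kappa_{\alpha}:C\rightarrow L_{\alpha}(C)$ is injective, and $\kappa_{\alpha}(p^{\omega}C)\subseteq u_{1}(L_{\alpha}(C))$; so whenever $p^{\omega}C\neq 0$ (which is the typical situation for limit $\alpha>\omega$) one has $u_{1}(L_{\alpha}(C))\neq 0$ and no such surjection can exist. The underlying reason is that the filtration of $H_{\alpha}$ by finite subgroups only detects finite $p$-heights, i.e.\ the $\omega$-topology: $\varprojlim_{i}\mathrm{Ext}(F_{i},C)$ is the quotient $\mathrm{Ext}_{\mathrm{w}}(H_{\alpha},C)$, not the $\alpha$-adic completion $L_{\alpha}(C)=\varprojlim_{\beta<\alpha}C/p^{\beta}C$. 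Consequently your exact sequence $0\rightarrow V\rightarrow\mathrm{Ext}(G_{\alpha},C)\rightarrow E_{\alpha}(C)\rightarrow 0$ is unavailable, and the rest of the argument has nothing to stand on. (For $\alpha=\omega$, where $H_{\omega}=\mathbb{Z}(p^{\infty})$ and $L_{\omega}(C)$ is the $p$-adic completion, your plan does work and reduces to the classical fact that the torsion part of the cotorsion hull of a reduced $p$-group is the group itself; the difficulty is precisely that this picture does not propagate to transfinite limit lengths.)

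The paper's proof goes in the opposite direction, and this is where Keef's results enter: $\mathrm{Ext}(H_{\alpha},C)$ is complete and Hausdorff in the $\alpha$-topology, and the embedding $C\rightarrow\mathrm{Ext}(H_{\alpha},C)$ is isotype, so it extends to a Borel-definable monomorphism $L_{\alpha}(C)\rightarrow L_{\alpha}(\mathrm{Ext}(H_{\alpha},C))\cong\mathrm{Ext}(H_{\alpha},C)$; passing to quotients by $C$ gives an injective (not surjective) Borel-definable homomorphism $E_{\alpha}(C)\rightarrow\mathrm{Ext}(G_{\alpha},C)$, and Keef's theorem asserts that this restricts to an isomorphism between the torsion subgroups of $E_{\alpha}(C)$ and of $p^{\alpha}\mathrm{Ext}(G_{\alpha},C)$; the lemma then follows because $A$ is torsion, so every homomorphism from $A$ lands in these torsion subgroups. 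If you wish to salvage a limit-style argument you would need a filtration that sees transfinite heights (this is what completeness in the $\alpha$-topology encodes), and note that even then your torsion-freeness claim for $V$ would, for general limit $\alpha$, amount to another formulation of Keef's theorem rather than an elementary $\varprojlim^{1}$ computation on the tower $(C[p^{n}])$.
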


\begin{proof}
By Lemma \ref{Lemma:short-exact-functor}(1), after replacing $C$ with $%
C/p^{\alpha }C$, we can assume that $p^{\alpha }C=0$. Consider the short
exact sequence 
\begin{equation*}
C\rightarrow \mathrm{Ext}\left( H_{\alpha },C\right) \rightarrow \mathrm{Ext}%
\left( G_{\alpha },C\right)
\end{equation*}%
of modules with a Polish cover induced by 
\begin{equation*}
R\rightarrow G_{\alpha }\rightarrow H_{\alpha }\text{.}
\end{equation*}%
As in the proof of \cite{keef_injective_1994}, $\mathrm{Ext}\left( H_{\alpha
},C\right) $ is complete in the $\alpha $-topology. Furthermore, the
monomorphism $C\rightarrow \mathrm{Ext}\left( H_{\alpha },C\right) $ is
isotype as in Definition \ref{Definition:isotype}, and hence it induces a
monomorphism of Polish modules 
\begin{equation*}
L_{\alpha }\left( C\right) \rightarrow \mathrm{Ext}\left( H_{\alpha
},C\right) \cong L_{\alpha }\left( \mathrm{Ext}\left( H_{\alpha },C\right)
\right) \text{.}
\end{equation*}%
In turn, this induces an injective homomorphism of modules with a Polish
cover 
\begin{equation*}
E_{\alpha }\left( C\right) \rightarrow \mathrm{Ext}\left( G_{\alpha
},C\right) \text{.}
\end{equation*}%
It is proved in \cite{keef_injective_1994} that this homomorphism induces an
isomorphism between the torsion submodules of $E_{\alpha }\left( C\right) $
and $p^{\alpha }\mathrm{Ext}\left( G_{\alpha },C\right) $. As $A$ is a
torsion module, this induces an isomorphism of modules with a Polish cover 
\begin{equation*}
\mathrm{Hom}\left( A,E_{\alpha }\left( C\right) \right) \cong \mathrm{Hom}%
\left( A,p^{\alpha }\mathrm{Ext}\left( G_{\alpha },C\right) \right) \text{.}
\end{equation*}%
This concludes the proof.
\end{proof}

The following corollary is an immediate consequence of Lemma \ref{Lemma:Keef}
and Lemma \ref{Lemma:short-exact-functor}, considering that every countable
torsion module is totally projective. Recall that, for a module $A$ and $%
\alpha <\omega _{1}$, $A^{\alpha }=p^{\omega \alpha }A$.

\begin{corollary}
\label{Corollary:Keef}Suppose that $A,C$ are countable torsion modules, and $%
\alpha <\omega _{1}$ is an ordinal.

\begin{enumerate}
\item The quotient map $C\rightarrow C/C^{\alpha }$ induces isomorphisms 
\begin{equation*}
\mathrm{Ext}\left( G_{\omega \alpha },C\right) \rightarrow \mathrm{Ext}%
\left( G_{\omega \alpha },C/C^{\alpha }\right)
\end{equation*}%
and 
\begin{equation*}
\mathrm{Ext}\left( H_{\omega \alpha },C\right) \rightarrow \mathrm{Ext}%
\left( H_{\omega \alpha },C/C^{\alpha }\right) \text{;}
\end{equation*}

\item If $C^{\alpha }=0$ then $\mathrm{Ext}\left( A,C\right) ^{\alpha }$ and 
$\mathrm{Hom}\left( A^{\alpha },E_{\omega \alpha }\left( C\right) \right) $
are isomorphic Polish modules;

\item The short exact sequences $A^{\alpha }\rightarrow A\rightarrow
A/A^{\alpha }$ and $C^{\alpha }\rightarrow C\rightarrow C/C^{\alpha }$
induce homomorphisms 
\begin{equation*}
\mathrm{Ext}\left( A,C^{\alpha }\right) \rightarrow \mathrm{Ext}\left(
A^{\alpha },C^{\alpha }\right)
\end{equation*}%
and 
\begin{equation*}
\mathrm{Hom}\left( A,C/C^{\alpha }\right) \rightarrow \mathrm{Ext}\left(
A,C^{\alpha }\right) \text{,}
\end{equation*}%
which induce an isomorphism%
\begin{equation*}
\eta _{A,C}^{\alpha }:\frac{\mathrm{Ext}\left( A,C^{\alpha }\right) }{%
\mathrm{Ran}\left( \mathrm{Hom}\left( A,C/C^{\alpha }\right) \rightarrow 
\mathrm{Ext}\left( A,C^{\alpha }\right) \right) }\rightarrow \mathrm{Ext}%
\left( A^{\alpha },C^{\alpha }\right) \text{;}
\end{equation*}

\item The short exact sequence $C^{\alpha }\rightarrow C\rightarrow
C/C^{\alpha }$ induces an exact sequence%
\begin{equation*}
0\rightarrow \mathrm{Hom}\left( A,C/C^{\alpha }\right) \rightarrow \mathrm{%
Ext}\left( A,C^{\alpha }\right) \rightarrow \mathrm{Ext}\left( A,C\right)
^{\alpha }\rightarrow \mathrm{Ext}\left( A,C/C^{\alpha }\right) ^{\alpha
}\rightarrow 0
\end{equation*}%
which induces a pure exact sequence%
\begin{equation*}
0\rightarrow \frac{\mathrm{Ext}\left( A,C^{\alpha }\right) }{\mathrm{Ran}%
\left( \mathrm{Hom}\left( A,C/C^{\alpha }\right) \rightarrow \mathrm{Ext}%
\left( A,C^{\alpha }\right) \right) }\overset{\rho _{A,C}^{\alpha }}{%
\rightarrow }\mathrm{Ext}\left( A,C\right) ^{\alpha }\rightarrow \mathrm{Ext}%
\left( A,C/C^{\alpha }\right) ^{\alpha }\rightarrow 0\text{;}
\end{equation*}

\item The injective homomorphism%
\begin{equation*}
r_{A,C}^{\alpha }:=\rho _{A,C}^{\alpha }\circ (\eta _{A,C}^{\alpha })^{-1}:%
\mathrm{Ext}\left( A^{\alpha },C^{\alpha }\right) \rightarrow \mathrm{Ext}%
\left( A,C\right) ^{\alpha }
\end{equation*}%
restricts to an isomorphism%
\begin{equation*}
\gamma _{A,C}^{\alpha }:\mathrm{PExt}\left( A^{\alpha },C^{\alpha }\right)
\rightarrow \mathrm{Ext}\left( A,C\right) ^{\alpha +1}\text{;}
\end{equation*}

\item We have a \emph{pure} exact sequence of modules with a Polish cover%
\begin{equation*}
0\rightarrow \mathrm{Ext}\left( A^{\alpha },C^{\alpha }\right) \overset{%
r_{A,C}^{\alpha }}{\rightarrow }\mathrm{Ext}\left( A,C\right) ^{\alpha
}\rightarrow \mathrm{Hom}\left( A^{\alpha },E_{\omega \alpha }\left(
C\right) \right) \rightarrow 0
\end{equation*}
\end{enumerate}
\end{corollary}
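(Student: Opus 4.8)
The plan is to obtain every item by specializing Lemma \ref{Lemma:short-exact-functor} and Lemma \ref{Lemma:Keef} to the cotorsion functor $S=(A\mapsto A^\alpha)$. For $p$-local groups $A^\alpha=p^{\omega\alpha}A$, so $S$ is the cotorsion functor $A\mapsto p^{\omega\alpha}A$; it has enough projectives and is represented by $\mathbb{Z}\rightarrow G_{\omega\alpha}\rightarrow H_{\omega\alpha}$ with $H_{\omega\alpha}$ a countable $p$-group of Ulm length $\omega\alpha$, hence an $S$-projective torsion group. The hypothesis of Lemma \ref{Lemma:short-exact-functor} that $A/SA=A/A^\alpha$ be $S$-projective holds because every countable $p$-group is totally projective, so $A/p^{\omega\alpha}A$ is $p^{\omega\alpha}$-projective. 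Since $A,C$ are $p$-groups, $\mathrm{Ext}(A,C)$ and $\mathrm{Ext}(A,C/C^\alpha)$ are $p$-local, so $S\mathrm{Ext}(A,C)=\mathrm{Ext}(A,C)^\alpha$ and $S\mathrm{Ext}(A,C/C^\alpha)=\mathrm{Ext}(A,C/C^\alpha)^\alpha$; with these identifications together with $SA=A^\alpha$ and $SC=C^\alpha$, items (1), (3), (4) are verbatim the corresponding clauses of Lemma \ref{Lemma:short-exact-functor}. For item (5) I would additionally use that $S\mathrm{Ext}(A,C)=u_\alpha(\mathrm{Ext}(A,C))$, whence $u_1(S\mathrm{Ext}(A,C))=u_{\alpha+1}(\mathrm{Ext}(A,C))=\mathrm{Ext}(A,C)^{\alpha+1}$; the map $\gamma_{A,C}^\alpha$ is then precisely the isomorphism furnished by Lemma \ref{Lemma:short-exact-functor}(5).

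The one new ingredient enters in item (2). By Lemma \ref{Lemma:short-exact-functor}(2), when $C^\alpha=0$ we have $\mathrm{Ext}(A,C)^\alpha\cong\mathrm{Hom}(A^\alpha,\mathrm{Ext}(G_{\omega\alpha},C))=\mathrm{Hom}(A^\alpha,\mathrm{Ext}(G_{\omega\alpha},C)^\alpha)$. To rewrite the right-hand side I would apply Lemma \ref{Lemma:Keef} with the limit ordinal $\omega\alpha$ to the countable $p$-groups $A^\alpha$ and $C$, obtaining $\mathrm{Hom}(A^\alpha,E_{\omega\alpha}(C))\cong\mathrm{Hom}(A^\alpha,p^{\omega\alpha}\mathrm{Ext}(G_{\omega\alpha},C))$, and note that $p^{\omega\alpha}\mathrm{Ext}(G_{\omega\alpha},C)=\mathrm{Ext}(G_{\omega\alpha},C)^\alpha$; composing the two isomorphisms gives (2). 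The case $\alpha=0$ is degenerate ($S$ is the identity and $E_0(C)=0$, so the assertion reads $0\cong0$), so I may assume $\alpha\geq1$, for which $\omega\alpha$ is genuinely a limit ordinal and Lemma \ref{Lemma:Keef} applies.

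For item (6) I would start from the pure Borel-definable exact sequence of (4)--(5), which after transport along $\eta_{A,C}^\alpha$ reads $0\rightarrow\mathrm{Ext}(A^\alpha,C^\alpha)\overset{r_{A,C}^\alpha}{\rightarrow}\mathrm{Ext}(A,C)^\alpha\rightarrow\mathrm{Ext}(A,C/C^\alpha)^\alpha\rightarrow0$, so it remains only to identify the cokernel with $\mathrm{Hom}(A^\alpha,E_{\omega\alpha}(C))$. Since $S$ has enough projectives it is a radical, whence $(C/C^\alpha)^\alpha=S(C/SC)=0$; thus item (2) applies to the pair $(A,C/C^\alpha)$ and yields $\mathrm{Ext}(A,C/C^\alpha)^\alpha\cong\mathrm{Hom}(A^\alpha,E_{\omega\alpha}(C/C^\alpha))$. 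Finally I would record that $E_{\omega\alpha}(C/C^\alpha)=E_{\omega\alpha}(C)$: for $\beta<\omega\alpha$ one has $C^\alpha=p^{\omega\alpha}C\subseteq p^\beta C$, so $C/p^\beta C\cong(C/C^\alpha)/p^\beta(C/C^\alpha)$, the map $\kappa_{\omega\alpha}$ factors through $C/C^\alpha$ with unchanged image, and hence both $L_{\omega\alpha}$ and its cokernel $E_{\omega\alpha}$ agree for $C$ and $C/C^\alpha$.

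The argument is essentially bookkeeping, and I expect the main effort to lie in keeping the dictionary straight between the abstract cotorsion-functor indexing of Lemma \ref{Lemma:short-exact-functor} (through $S$ and its representing pair $(G_{\omega\alpha},H_{\omega\alpha})$) and the Ulm-subgroup indexing of the statement: specifically the identity $u_1(u_\alpha(G))=G^{\alpha+1}$ together with the $p$-locality needed to write $S\mathrm{Ext}(A,C)=\mathrm{Ext}(A,C)^\alpha$, the passage through the limit ordinal $\omega\alpha$ when invoking Lemma \ref{Lemma:Keef}, and the radical identity $(C/C^\alpha)^\alpha=0$ that feeds item (2) back into item (6). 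Beyond carefully tracking these indices and the degenerate $\alpha=0$ case, I do not anticipate a substantive obstacle.
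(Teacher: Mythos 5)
Your proposal is correct and takes essentially the same route as the paper: the paper obtains this corollary as an immediate consequence of Lemma \ref{Lemma:short-exact-functor} and Lemma \ref{Lemma:Keef} applied to the cotorsion functor $S=p^{\omega \alpha }$ (so that $SA=A^{\alpha }$ for $p$-local groups), using that every countable $p$-group is totally projective, which is exactly your specialization. The extra bookkeeping you supply --- the identity $u_{1}\left( u_{\alpha }\left( G\right) \right) =G^{\alpha +1}$ for item (5), the radical identity $\left( C/C^{\alpha }\right) ^{\alpha }=0$ and $E_{\omega \alpha }\left( C/C^{\alpha }\right) =E_{\omega \alpha }\left( C\right) $ for item (6), and the degenerate case $\alpha =0$ --- is routine detail the paper leaves implicit.
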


\section{Phantom extensions\label{Section:phantom}}

In this section, we continue to assume that $R\ $is a countable DVR, and all
modules are $R$-modules. We characterize for a given countable ordinal $%
\alpha $ the extensions of torsion modules that are phantom of order $\alpha 
$. Furthermore, we determine the (plain) Solecki length of $\mathrm{Ext}%
\left( C,A\right) $ for given torsion modules $C$ and $A$ in terms of their
Ulm invariants.

\subsection{Complexity of $\mathrm{Hom}$}

We being with recording some lemmas concerning the complexity of the module
with a Polish cover $E_{\alpha }\left( A\right) $ as defined in Section \ref%
{Subsection:completion}.

\begin{lemma}
\label{Lemma:Ealpha}Suppose that $A$ is a countable reduced torsion module, $%
d\in \omega $, and $\alpha <\omega _{1}$ is a limit ordinal such that $%
p^{\beta }A\neq 0$ for every $\beta <\alpha $. Then:

\begin{itemize}
\item $E_{\alpha }\left( A\right) $ is divisible;

\item $E_{\alpha }\left( A\right) [p^{d}]=\left\{ x\in E_{\alpha }\left(
A\right) :p^{d}x=0\right\} $ is a submodule with a Polish cover of $%
E_{\alpha }\left( A\right) $;

\item $E_{\alpha }\left( A\right) [p^{d}]$ and $E_{\alpha }\left( A\right) $
have plain Solecki length $1$.
\end{itemize}
\end{lemma}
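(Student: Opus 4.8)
The plan is to work throughout with the canonical presentation $E_{\alpha}(A)=L_{\alpha}(A)/\kappa _{\alpha}(A)$ as a group with a non-Archimedean Polish cover, where $L_{\alpha}(A)=\lim_{\beta <\alpha }A/p^{\beta }A$ is Polish and $\kappa _{\alpha}(A)$ is the (countable) image of $A$. Since $L_{\alpha}(A)$ depends only on $A/p^{\alpha }A$ and the hypotheses pass to $A/p^{\alpha }A$ (it is again a countable reduced $p$-group with $p^{\beta }(A/p^{\alpha }A)\neq 0$ for $\beta <\alpha$), I first reduce to the case $p^{\alpha }A=0$, so that $\kappa _{\alpha}$ is injective. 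Fix a strictly increasing sequence $\beta _{0}<\beta _{1}<\cdots$ cofinal in $\alpha$; then $L_{\alpha}(A)=\lim_{n}A/p^{\beta _{n}}A$ is the limit of a tower of countable discrete groups with surjective transition maps.

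For divisibility, since all groups are $p$-local it suffices to show that $E_{\alpha}(A)$ is $p$-divisible, i.e. $L_{\alpha}(A)=\kappa _{\alpha}(A)+pL_{\alpha}(A)$. Given $x\in L_{\alpha}(A)$, coherence lets me choose $a\in A$ with $x\equiv \kappa _{\alpha}(a)\pmod{pA}$ at every level, so that each coordinate of $x-\kappa _{\alpha}(a)$ lies in $p(A/p^{\beta }A)$; that is, $x-\kappa _{\alpha}(a)\in \lim_{n}p(A/p^{\beta _{n}}A)$. The crux is then the identity $\lim_{n}p(A/p^{\beta _{n}}A)=pL_{\alpha}(A)$, which I will obtain from the $\lim$–$\lim ^{1}$ six term sequence attached to $0\rightarrow K_{\beta }\rightarrow A/p^{\beta }A\overset{p}{\rightarrow }p(A/p^{\beta }A)\rightarrow 0$, where $K_{\beta }:=(A/p^{\beta }A)[p]$. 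The key observation, and the main technical point of the whole lemma, is that the tower $(K_{\beta _{n}})_{n}$ is Mittag--Leffler: for fixed $\beta$ an element $\bar{y}\in K_{\beta }$ lifts to $K_{\beta ^{\prime }}$ exactly when $py\in p^{\beta +1}A+p^{\beta ^{\prime }}A$, and for every $\beta ^{\prime }\geq \beta +1$ this condition reads simply $py\in p^{\beta +1}A$, so the images $\mathrm{im}(K_{\beta ^{\prime }}\rightarrow K_{\beta })$ stabilize at $\beta ^{\prime }=\beta +1$. Hence $\lim ^{1}K=0$, the map $p\colon L_{\alpha}(A)\rightarrow \lim_{n}p(A/p^{\beta _{n}}A)$ is onto, and $p$-divisibility (whence divisibility) follows.

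Claim (2) is immediate: multiplication by $p^{d}$ is a Borel-definable endomorphism of $E_{\alpha}(A)$, induced by the continuous map $p^{d}$ on $L_{\alpha}(A)$, and $E_{\alpha}(A)[p^{d}]$ is its kernel, hence a Polishable subgroup of $E_{\alpha}(A)$ by the description of kernels in the category $\mathcal{M}$.

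For (3) I bound the complexity class of $\{0\}$ above by the complexity of a countable defining subgroup and below by exhibiting a copy of $E_{0}$. For $E_{\alpha}(A)$ the defining subgroup is $\kappa _{\alpha}(A)$ inside $L_{\alpha}(A)$: it is countable, hence $\boldsymbol{\Sigma }_{2}^{0}$, and it is not discrete, since choosing $0\neq b_{n}\in (p^{\beta _{n}}A)[p]$ (possible as $p^{\beta _{n}}A\neq 0$) produces nonzero elements $\kappa _{\alpha}(b_{n})\rightarrow 0$. A countable non-discrete subgroup of a Polish group cannot be closed (a countable closed subgroup would be Polish, hence discrete by Baire), so $\kappa _{\alpha}(A)$ is not $\boldsymbol{\Pi }_{1}^{0}$; since among the possible complexity classes of Polishable subgroups the only ones contained in $\boldsymbol{\Sigma }_{2}^{0}$ are $\boldsymbol{\Pi }_{1}^{0}$ and $\boldsymbol{\Sigma }_{2}^{0}$, the class is $\boldsymbol{\Sigma }_{2}^{0}$. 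For $E_{\alpha}(A)[p^{d}]=\hat{H}/\kappa _{\alpha}(A)$ the upper bound is the same, $\kappa _{\alpha}(A)$ being countable, so $=_{E_{\alpha}(A)[p^{d}]}\,\leq E_{0}$ by Proposition \ref{Proposition:complexity-equality}. For the lower bound I pass to the Polishable subgroup $P:=L_{\alpha}(A)[p^{d}]/\kappa _{\alpha}(A[p^{d}])$ of $E_{\alpha}(A)[p^{d}]$, using the identification $L_{\alpha}(A)[p^{d}]\cap \kappa _{\alpha}(A)=\kappa _{\alpha}(A[p^{d}])$ (here the reduction $p^{\alpha }A=0$ is used): its cover $L_{\alpha}(A)[p^{d}]=\ker (p^{d}\colon L_{\alpha}(A)\rightarrow L_{\alpha}(A))$ is a genuine Polish group, and $\kappa _{\alpha}(A[p^{d}])$ is again countable and non-discrete (the same $b_{n}$ work), hence non-closed, so $\boldsymbol{\Sigma }_{2}^{0}$ is the complexity class of $\{0\}$ in $P$ and $=_{P}$ is bireducible with $E_{0}$ by Proposition \ref{Proposition:complexity-equality}. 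As the inclusion $P\hookrightarrow E_{\alpha}(A)[p^{d}]$ reduces $=_{P}$ to $=_{E_{\alpha}(A)[p^{d}]}$, we get $E_{0}\leq \,=_{E_{\alpha}(A)[p^{d}]}\,\leq E_{0}$, so the complexity class is again $\boldsymbol{\Sigma }_{2}^{0}$.
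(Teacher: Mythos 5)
Your proof is correct, and at the two places where real work is needed it takes a genuinely different route from the paper's. For divisibility, the paper argues that $\kappa_{\alpha}(A)$ is dense in $L_{\alpha}(A)$ and that $pL_{\alpha}(A)$ is an \emph{open} subgroup, so that $L_{\alpha}(A)=\kappa_{\alpha}(A)+pL_{\alpha}(A)$; the openness is stated without further argument, and it follows from the identity $pL_{\alpha}(A)=\lim_{n}p(A/p^{\beta_{n}}A)$ (the right-hand side being the kernel of the continuous map $L_{\alpha}(A)\rightarrow A/pA$), which is exactly what your Mittag--Leffler computation for the socle tower $\bigl((A/p^{\beta_{n}}A)[p]\bigr)_{n}$ together with the $\lim$--$\lim^{1}$ sequence establishes; so on this point you supply a complete argument for a step the paper only asserts. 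For the complexity class the divergence is more substantial: the paper proves the stronger fact that $\{0\}$ is \emph{dense} in $E_{\alpha}(A)[p^{d}]$, by an explicit approximation in the finer topology of the cover $\hat{G}=\{x\in L_{\alpha}(A):p^{d}x\in A\}$ (a representative is written as $\sum_{n}a_{n}$ with $a_{n}\in p^{\alpha_{n}}A$, normalized using $A\cap p^{d}L_{\alpha}(A)=p^{d}A$, and the partial sums are corrected by elements $b_{k}\in p^{\alpha_{k+1}}A$), and density plus nontriviality excludes $\boldsymbol{\Pi}_{2}^{0}$; you instead prove only non-smoothness, via the soft fact that a countable non-discrete subgroup of a Polish group cannot be closed, applied to $\kappa_{\alpha}(A)\subseteq L_{\alpha}(A)$ and to $\kappa_{\alpha}(A[p^{d}])\subseteq L_{\alpha}(A)[p^{d}]$, where the cover topology is the plain subspace topology, and you then transport the lower bound into $E_{\alpha}(A)[p^{d}]$ by Borel reducibility and close with Proposition \ref{Proposition:complexity-equality}. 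Your route is shorter and avoids any computation in the finer cover topology; the paper's route yields the density of $\{0\}$ in $E_{\alpha}(A)[p^{d}]$, a sharper structural conclusion than the lemma as stated requires. The reduction to $p^{\alpha}A=0$, the kernel description of $E_{\alpha}(A)[p^{d}]$, and the countable-subgroup upper bound coincide with the paper's.
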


\begin{proof}
After replacing $A$ with $A/p^{\alpha }A$ we can assume that $p^{\alpha }A=0$%
. Thus, the canonical map $A\rightarrow L_{\alpha }\left( A\right) $ is a
monomorphism, and $E_{\alpha }\left( A\right) =L_{\alpha }\left( A\right) /A$%
. Notice that $pL_{\alpha }\left( A\right) $ is an open submodule of $%
L_{\alpha }\left( A\right) $, and $A$ is a dense submodule $L_{\alpha
}\left( A\right) $. Thus, $pL_{\alpha }\left( A\right) +A=L_{\alpha }\left(
A\right) $.\ Hence, the map $pL_{\alpha }\left( A\right) \rightarrow
E_{\alpha }\left( A\right) $, $a\mapsto a+A$ is surjective, and $E_{\alpha
}\left( A\right) $ is divisible. Since $p^{\beta }A\neq 0$ for every $\beta
<\alpha $, we have that $L_{\alpha }\left( A\right) $ is uncountable. In
particular, $E_{\alpha }\left( A\right) $ and $E_{\alpha }\left( A\right)
[p^{d}]$ are nontrivial. Fix a strictly increasing sequence $\left( \alpha
_{n}\right) _{n\in \omega }$ of countable ordinals such that $\alpha _{0}=0$
and $\mathrm{sup}_{n}\alpha _{n}=\alpha $. We have that $E_{\alpha }\left(
A\right) [p^{d}]=\hat{G}/A$ where 
\begin{equation*}
\hat{G}:=\left\{ x\in L_{\alpha }\left( A\right) :p^{d}x\in A\right\}
\subseteq L_{\alpha }\left( A\right) \text{.}
\end{equation*}%
The Polish module topology on $\hat{G}$ is defined by letting $\left(
x_{i}\right) $ converge to $0$ if and only if $x_{i}\rightarrow x$ in $%
L_{\alpha }\left( A\right) $ and $p^{d}x_{i}=p^{d}x$ eventually. Thus, $%
E_{\alpha }\left( A\right) [p^{d}]$ is a submodule with a Polish cover of $%
E_{\alpha }\left( A\right) $. This also follows from the fact that $%
E_{\alpha }\left( A\right) [p^{d}]$ is the kernel of the homomorphism $%
E_{\alpha }\left( A\right) \rightarrow E_{\alpha }\left( A\right) $, $%
x\mapsto p^{d}x$.

It is clear that $\left\{ 0\right\} $ is $\boldsymbol{\Sigma }_{2}^{0}$ in $%
E_{\alpha }\left( A\right) $, since $A$ is countable. We now show that $%
\left\{ 0\right\} $ is dense in $E_{\alpha }\left( A\right) [p^{d}]$.

Consider an element $x$ of $E_{\alpha }\left( A\right) [p^{d}]$. Then $x$
can be written as $a+A$ where $a\in L_{\alpha }\left( A\right) $. In turn,
one can write $a$ as $\sum_{n\in \omega }a_{n}$ for $a_{n}\in p^{\alpha
_{n}}A$. Since $p^{d}x=0$ we have that $\sum_{n}p^{d}a_{n}\in A\cap
p^{d}L_{\alpha }\left( A\right) =p^{d}A$.\ Thus, we can find $b\in A$ such
that $\sum_{n}p^{d}a_{n}=p^{d}b$. After replacing $a_{0}$ with $a_{0}-b$ we
can assume that $\sum_{n}p^{d}a_{n}=0$. Thus, for every $k\in \omega $, we
have that $\sum_{n=0}^{k}p^{d}a_{n}\in p^{\alpha _{k+1}+d}A$ and hence we
can find $b_{k}\in p^{\alpha _{k+1}}A$ such that $%
\sum_{n=0}^{k}p^{d}a_{n}=p^{d}b_{k}$. Thus, we have that $\left(
a_{0}+a_{1}+\cdots +a_{k}-b_{k}\right) _{k\in \omega }$ is a sequence in $%
\hat{G}$ that converges to $a$. This shows that $\left\{ 0\right\} $ is
dense in $E_{\alpha }\left( A\right) [p^{d}]$, and hence not closed in $%
E_{\alpha }\left( A\right) [p^{d}]$. This concludes the proof that $%
E_{\alpha }\left( A\right) [p^{d}]$ and $E_{\alpha }\left( A\right) $ have
plain Solecki length $1$.
\end{proof}

\begin{proposition}
\label{Proposition:Hom-Ealpha}Suppose that $T$ is a nonzero countable
module, $A$ is a countable reduced torsion module, and $\alpha <\omega _{1}$
is a limit ordinal such that $p^{\beta }A\neq 0$ for every $\beta <\alpha $.

\begin{enumerate}
\item If $T$ is finite, then $\boldsymbol{\Sigma }_{2}^{0}$ is the
complexity class of $\left\{ 0\right\} $ in $\mathrm{Hom}\left( T,E_{\omega
\alpha }\left( A\right) \right) $.

\item If $T$ is not finite, then $\boldsymbol{\Pi }_{3}^{0}$ is the
complexity class of $\left\{ 0\right\} $ in $\mathrm{Hom}\left( T,E_{\omega
\alpha }\left( A\right) \right) $.
\end{enumerate}
\end{proposition}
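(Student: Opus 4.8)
The plan is to establish matching upper and lower bounds on the complexity class of $\{0\}$ in $\mathrm{Hom}(T,E_{\omega\alpha}(A))$, handling the finite and infinite cases of $T$ in parallel. Note that $\omega\alpha$ is a limit ordinal (we may assume $\alpha\geq 1$), so Lemma \ref{Lemma:Ealpha} applies to $E_{\omega\alpha}(A)$: the complexity class of $\{0\}$ in $E_{\omega\alpha}(A)$, and in each $E_{\omega\alpha}(A)[p^d]$, is exactly $\boldsymbol{\Sigma}_2^0$, and all the groups with a Polish cover appearing below are non-Archimedean. For the lower bounds I will repeatedly invoke the following transfer principle: an injective Borel-definable homomorphism $\iota:G'\to G$ of groups with a Polish cover induces a Borel reduction $=_{G'}\,\leq\,=_{G}$, since any Borel lift $\hat\iota$ satisfies $x=_{G'}y\iff \iota([x])=\iota([y])\iff \hat\iota(x)=_{G}\hat\iota(y)$, the middle equivalence using that $\iota$ is an injective homomorphism of the quotient groups. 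Combined with Proposition \ref{Proposition:complexity-equality}, this lets me push lower bounds upward along surjections in the first variable.

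For the upper bounds, Lemma \ref{Lemma:upper-Hom} provides a Borel-definable injective homomorphism $\mathrm{Hom}(T,E_{\omega\alpha}(A))\to E_{\omega\alpha}(A)^{\omega}$, and into the finite power $E_{\omega\alpha}(A)^{n}$ when $T$ is generated by $n$ elements. If $T$ is finite, then $\{0\}$ is $\boldsymbol{\Sigma}_2^0$ in each factor by Lemma \ref{Lemma:Ealpha}, hence in the finite product $E_{\omega\alpha}(A)^{n}$ (a finite intersection of $F_\sigma$ sets); pulling back through the injection using the non-Archimedean case of Proposition \ref{Proposition:HKL} shows $\{0\}$ is $\boldsymbol{\Sigma}_2^0$ in $\mathrm{Hom}(T,E_{\omega\alpha}(A))$. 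If $T$ is infinite, then $\{0\}=\bigcap_k \pi_k^{-1}(\{0\})$ is a countable intersection of $\boldsymbol{\Sigma}_2^0$ sets, hence $\boldsymbol{\Pi}_3^0$, in $E_{\omega\alpha}(A)^{\omega}$, and pulling back gives that $\{0\}$ is $\boldsymbol{\Pi}_3^0$ in $\mathrm{Hom}(T,E_{\omega\alpha}(A))$.

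For the lower bounds I would, in each case, produce a surjection of $T$ onto a simpler $p$-group $S$ for which the complexity of $\{0\}$ in $\mathrm{Hom}(S,E_{\omega\alpha}(A))$ is already known, and apply the transfer principle to the induced injection $\mathrm{Hom}(S,E_{\omega\alpha}(A))\hookrightarrow \mathrm{Hom}(T,E_{\omega\alpha}(A))$. When $T$ is finite (so $T\neq pT$) I use a surjection $T\twoheadrightarrow \mathbb{Z}/p$; here $\mathrm{Hom}(\mathbb{Z}/p,E_{\omega\alpha}(A))\cong E_{\omega\alpha}(A)[p]$ has $\{0\}$ of complexity $\boldsymbol{\Sigma}_2^0$, so $=_{E_{\omega\alpha}(A)[p]}$ is bireducible with $E_0$ by Proposition \ref{Proposition:complexity-equality}(2), whence $E_0\leq\,=_{\mathrm{Hom}(T,E_{\omega\alpha}(A))}$; with the $\boldsymbol{\Sigma}_2^0$ upper bound this forces the complexity class to be exactly $\boldsymbol{\Sigma}_2^0$. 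When $T$ is infinite I split into two cases. If $T$ is unbounded, I claim $T\twoheadrightarrow \mathbb{Z}(p^\infty)$: if $T$ has nonzero divisible part this is projection onto a $\mathbb{Z}(p^\infty)$ summand, while if $T$ is reduced and unbounded its basic subgroup is unbounded (a bounded basic subgroup would be pure and bounded, hence a direct summand with divisible complement, contradicting reducedness), so $T$ contains $\bigoplus_n\langle b_n\rangle$ with $\mathrm{ord}(b_n)\to\infty$; sending each $b_n$ to an element of order $\mathrm{ord}(b_n)$ in $\mathbb{Z}(p^\infty)$ and extending by injectivity of the divisible group $\mathbb{Z}(p^\infty)$ yields the surjection. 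Then $\{0\}$ has complexity $\boldsymbol{\Pi}_3^0$ in $\mathrm{Hom}(\mathbb{Z}(p^\infty),E_{\omega\alpha}(A))$ by Lemma \ref{Lemma:Ealpha2}, so $E_0^{\omega}$ reduces through to $=_{\mathrm{Hom}(T,E_{\omega\alpha}(A))}$. If $T$ is bounded and infinite, Prüfer's theorem writes it as an infinite direct sum of cyclic groups, giving $T\twoheadrightarrow \bigoplus_\omega \mathbb{Z}/p$; then $\mathrm{Hom}(\bigoplus_\omega \mathbb{Z}/p,E_{\omega\alpha}(A))\cong \prod_\omega E_{\omega\alpha}(A)[p]$, and since each factor has $\{0\}$ of complexity $\boldsymbol{\Sigma}_2^0$, Lemma \ref{Lemma:product}(1) gives complexity $\boldsymbol{\Pi}_3^0$ for the product, again yielding $E_0^{\omega}\leq\,=_{\mathrm{Hom}(T,E_{\omega\alpha}(A))}$. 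In both infinite cases the matching $\boldsymbol{\Pi}_3^0$ upper bound forces the complexity class to be exactly $\boldsymbol{\Pi}_3^0$.

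The main obstacle is the group-theoretic case analysis for infinite $T$: producing the surjection $T\twoheadrightarrow \mathbb{Z}(p^\infty)$ in the unbounded case relies on the theory of basic subgroups and on the injectivity of divisible groups, and one must check that the reduced-unbounded case genuinely forces an unbounded direct sum of cyclics. The remaining ingredients—naturality of $\mathrm{Hom}(\bigoplus_i A_i,-)\cong \prod_i \mathrm{Hom}(A_i,-)$ and of $\mathrm{Hom}(\mathbb{Z}/p,-)\cong (-)[p]$ in the Borel-definable category, and the verification that every group in sight is non-Archimedean so that Propositions \ref{Proposition:HKL} and \ref{Proposition:complexity-equality} apply—are routine.
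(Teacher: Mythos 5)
Your argument is correct, and its upper-bound half is exactly the paper's: the injection of Lemma \ref{Lemma:upper-Hom} into a (finite or countable) power of $E_{\omega\alpha}(A)$, Lemma \ref{Lemma:Ealpha}, and the non-Archimedean pullback of Proposition \ref{Proposition:HKL}. Where you diverge is in how general $T$ is reduced to the computable building blocks, and in the direction of the transfer. The paper decomposes $T$ \emph{covariantly}: it splits off the divisible part, passes in the reduced case to $T/T^{1}$ (implicitly using that this quotient is still infinite), writes the result as an infinite direct sum of cyclic $p$-groups by Pr\"{u}fer's theorem, identifies $\mathrm{Hom}$ of the sum with the product of the cyclic pieces, and invokes Lemma \ref{Lemma:product}; Lemma \ref{Lemma:Ealpha2} is used only for the divisible summand. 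You instead work \emph{contravariantly}, producing epimorphisms $T\twoheadrightarrow \mathbb{Z}/p$ (finite case), $T\twoheadrightarrow \bigoplus_{\omega}\mathbb{Z}/p$ (bounded infinite case), and $T\twoheadrightarrow \mathbb{Z}\left( p^{\infty }\right)$ (unbounded case, via the unboundedness of the basic subgroup and injectivity of the Pr\"{u}fer group), and then pushing the benchmark relations $E_{0}$ and $E_{0}^{\omega }$ along the induced Borel-definable injections $\mathrm{Hom}(S,E_{\omega\alpha}(A))\hookrightarrow \mathrm{Hom}(T,E_{\omega\alpha}(A))$, concluding through Proposition \ref{Proposition:complexity-equality}. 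The two routes use comparable structure theory (basic subgroups and Pr\"{u}fer's first theorem for you, Ulm-rank-one decompositions for the paper), but yours leans on Lemma \ref{Lemma:Ealpha2} for every unbounded $T$ rather than only for divisible $T$, handles the lower-bound transfer through Borel reducibility of benchmark relations (which sidesteps any worry about pulling back the non-occurring class $\boldsymbol{\Sigma }_{3}^{0}$ along Borel lifts), and avoids having to check that $T/T^{1}$ stays infinite; the paper's route keeps everything, except the divisible part, inside the single finite computation $E_{\omega\alpha}(A)[p^{d}]$ plus Lemma \ref{Lemma:product}. The only hypotheses you should state explicitly are the harmless normalization $\alpha \geq 1$ (so that $\omega\alpha$ is a limit ordinal and Lemma \ref{Lemma:Ealpha} applies) and the non-Archimedean character of the $\mathrm{Hom}$ groups, both of which you flag; with those in place the proof is complete.
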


\begin{proof}
Suppose initially that $T$ is finite. In this case, $T$ is a finite sum of
cyclic modules. Thus, without loss of generality, we can assume that $T$ is
cyclic of order $p^{d}$ for some $d\geq 1$. In this case, we have that $%
\mathrm{Hom}\left( T,E_{\alpha }\left( A\right) \right) $ is isomorphic to $%
E_{\omega \alpha }\left( A\right) [p^{d}]$, and the conclusion follows from
Lemma \ref{Lemma:Ealpha}.

Suppose now $T$ is not finite, and reduced. The quotient map $T\rightarrow
T/T^{1}$ induces an injective homomorphism%
\begin{equation*}
\mathrm{Hom}\left( T/T^{1},E_{\omega \alpha }\left( A\right) \right)
\rightarrow \mathrm{Hom}\left( T,E_{\omega \alpha }\left( A\right) \right) 
\text{.}
\end{equation*}%
Then after replacing $T$ with $T/T^{1}$, we can assume that $T$ has Ulm rank 
$1$, and hence $T\cong \bigoplus_{n}T_{n}$ where, for every $n\in \omega $, $%
T_{n}$ is a nonzero cyclic torsion module. In this case, we have that%
\begin{equation*}
\mathrm{Hom}\left( T,E_{\omega \alpha }\left( A\right) \right) \cong
\prod_{n\in \omega }\mathrm{Hom}\left( T_{n},E_{\omega \alpha }\left(
A\right) \right) \text{.}
\end{equation*}%
The conclusion now follows from the case when $T$ is finite.

Suppose lastly that $T$ is not reduced. Without loss of generality, it
suffices to consider the case when $T=R\left( p^{\infty }\right) $.\ Then we
have that $\mathrm{Hom}\left( T,E_{\omega \alpha }\left( A\right) \right) $
is isomorphic to the limit of the tower whose terms are $E_{\omega \alpha
}\left( A\right) $ and bonding maps $E_{\omega \alpha }\left( A\right)
\rightarrow E_{\omega \alpha }\left( A\right) $, $x\mapsto px$. Since 
\begin{equation*}
E_{\omega \left( 1+\alpha -1\right) }\left( A\right) [p]\neq 0\text{,}
\end{equation*}
$\mathrm{Hom}\left( T,E_{\omega \alpha }\left( A\right) \right) $ is not
plain by \cite[Corollary 5.9]{casarosa_projective_2025}.
\end{proof}

\subsection{Solecki length of $\mathrm{Ext}$}

We now determine the (plain) Solecki length of $\mathrm{Ext}\left(
T,A\right) $ for given countable torsion modules $C,A$.

\begin{lemma}
\label{Lemma:vanishing-PExt}Suppose that $A,C$ are countable torsion
modules, and $\alpha <\omega _{1}$ is a countable ordinal.

\begin{enumerate}
\item If $\alpha $ is a successor, then%
\begin{equation*}
\mathrm{PExt}\left( A^{1+\alpha -1},C^{1+\alpha -1}\right) \cong s_{\alpha }%
\mathrm{Ext}\left( A,C\right) \text{;}
\end{equation*}

\item if 
\begin{equation*}
\mathrm{Ext}\left( A^{1+\alpha },C^{1+\alpha }\right) =0
\end{equation*}%
then%
\begin{equation*}
s_{\alpha }\mathrm{Ext}\left( A,C\right) \cong u_{1+\alpha }\mathrm{Ext}%
\left( A,C\right) \cong \mathrm{Hom}\left( A^{1+\alpha },E_{\omega \left(
1+\alpha \right) }C\right) \text{;}
\end{equation*}

\item if $A^{1+\alpha }=0$, then $s_{\alpha }\mathrm{Ext}\left( A,C\right)
=0 $;

\item if $\alpha $ is a successor and $C^{1+\alpha -1}$ is bounded, then $%
s_{\alpha }\mathrm{Ext}\left( A,C\right) =0$.
\end{enumerate}
\end{lemma}

\begin{proof}
The first two items follow from Corollary \ref{Corollary:Keef}(5) and \ref%
{Corollary:Keef}(6).

(3) If $A^{1+\alpha }=0$ then $\mathrm{Ext}\left( A^{1+\alpha },C^{1+\alpha
}\right) =0$ and $\mathrm{Hom}\left( A^{1+\alpha },E_{\omega \left( 1+\alpha
\right) }C\right) =0$. Thus the conclusion follows from (2).

(4)\ If $C^{1+\alpha -1}$ is bounded, then $C^{1+\alpha }=0$, $\mathrm{Ext}%
\left( A^{1+\alpha },C^{1+\alpha }\right) =0$ and $\mathrm{Hom}\left(
A^{1+\alpha },E_{\omega \left( 1+\alpha \right) }C\right) =0$. Thus, the
conclusion again follows from (2).
\end{proof}

\begin{theorem}
\label{Theorem:Solecki-Ext}Suppose that $C$ and $A$ are countable torsion
modules, with $A$ reduced. Let $\alpha $ be the least countable ordinal such
that either $C^{1+\alpha }=0$, or $\alpha $ is zero or successor and $%
A^{1+\alpha -1}$ is bounded. Then $\mathrm{Ext}\left( C,A\right) $ has
Solecki length $\alpha $. When $\alpha $ is a successor, we have that $%
\mathrm{Ext}\left( C,A\right) $ is plain if and only if $C^{1+\alpha -1}$ is
finite.
\end{theorem}

\begin{proof}
It follows from the Lemma \ref{Lemma:vanishing-PExt} that $s_{\alpha }%
\mathrm{Ext}\left( C,A\right) =0$, and $\mathrm{Ext}\left( C,A\right) $ has
Solecki length at most $\alpha $.

If $\alpha $ is limit, then we have for $\beta <\alpha $ successor%
\begin{equation*}
s_{\beta }\mathrm{Ext}\left( C,A\right) \cong \mathrm{PExt}\left( C^{1+\beta
-1},A^{1+\beta -1}\right) \neq 0
\end{equation*}%
by Lemma \ref{Lemma:vanishing-PExt}. This shows that $\mathrm{Ext}\left(
C,A\right) $ has Solecki length $\alpha $.

If $\alpha $ is a successor, then by Lemma \ref{Lemma:vanishing-PExt}%
\begin{equation*}
s_{\alpha -1}\mathrm{Ext}\left( C,A\right) \cong \mathrm{Hom}\left(
C^{1+\alpha -1},E_{\omega \left( 1+\alpha -1\right) }\left( A\right) \right)
\neq 0\text{.}
\end{equation*}%
This shows that $\mathrm{Ext}\left( T,A\right) $ has Solecki length equal to 
$\alpha $, and it is plain if and only if $C^{1+\alpha -1}$ is finite by
Proposition \ref{Proposition:Hom-Ealpha}.
\end{proof}

Combining Theorem \ref{Theorem:Solecki-Ext} in the case when $C$ is
divisible, together with Corollary \ref{Corollary:Keef}(5) and Theorem \ref%
{Theorem:ulm} yields the following.

\begin{corollary}
\label{Corollary:Solecki-Ext}Suppose that $A$ is a countable reduced torsion
module. Let $\alpha $ be the least countable successor ordinal such that $%
A^{1+\alpha -1}$ is bounded. Then $\mathrm{Ext}\left( R\left( p^{\infty
}\right) ,A\right) $ has Solecki length $\alpha $. Furthermore, for every
successor ordinal $\beta <\alpha $, the inclusion $A^{\beta }\rightarrow A$
induces an isomorphism $\mathrm{PExt}\left( R\left( p^{\infty }\right)
,A^{1+\beta -1}\right) \cong s_{\beta }\mathrm{Ext}\left( R\left( p^{\infty
}\right) ,A\right) =u_{1+\beta }\mathrm{Ext}\left( R\left( p^{\infty
}\right) ,A\right) $.
\end{corollary}

We can record the following complexity-theoretic consequence of Theorem \ref%
{Theorem:Solecki-Ext} obtained by applying \cite[Theorem 12.12]%
{casarosa_projective_2025} and \cite[Theorem 12.21]{casarosa_projective_2025}%
; see \cite[Section 12]{casarosa_projective_2025} for notation and
terminology.

\begin{corollary}
\label{Corollary:parametrize-Ext}Let $C$ and $A$ be countable torsion
modules, with $A$ reduced. Let $\alpha $ be the least countable ordinal such
that either $C^{1+\alpha }=0$, or $\alpha $ is zero or successor and $%
A^{1+\alpha -1}$ is bounded. Then extensions of $C$ by $A$ can be
parametrized by hereditarily countable sets of rank $\alpha $, but not by
hereditarily countable sets of rank $\beta <\alpha $. Furthermore, when $%
\alpha $ is a successor, extensions of $C$ by $A$ can be parametrized by
hereditarily countable sets of plain rank $\alpha $ if and only if $%
C^{1+\alpha -1}$ is finite.
\end{corollary}

From Theorem \ref{Theorem:Solecki-Ext} and Theorem \ref{Theorem:ulm} we
obtain the following. Recall the definition of the\ countable well-founded
rooted tree $I_{\alpha }^{\mathrm{plain}}$ and forest $I_{\alpha }$ of rank $%
\alpha $ from Section \ref{Subsection:projective-length}.

\begin{corollary}
\label{Corollary:projective}Suppose that $C$ is a countable torsion module,
and $\alpha <\omega _{1}$, the following assertions are equivalent:

\begin{enumerate}
\item $C$ has plain projective length at most $\alpha $;

\item $C$ is reduced of Ulm length at most $1+\alpha $ and $C^{1+\alpha -1}$
is bounded;
\end{enumerate}

Furthermore, the following assertions are equivalent:

\begin{enumerate}
\item $C$ has projective length at most $\alpha $;

\item $C$ is reduced of Ulm length at most $1+\alpha $;

\item $C$ is a colimit of a presheaf of finite torsion modules over a
countable well-founded forest of rank $1+\alpha $.
\end{enumerate}
\end{corollary}

\begin{proof}
The equivalence of (1) and (2) follows from Theorem \ref{Theorem:Solecki-Ext}
and Theorem \ref{Theorem:ulm}. By Proposition \ref%
{Proposition:projective-length} we have (3)$\Rightarrow $(1). We prove that
(2)$\Rightarrow $(3) by induction on $\alpha $. For $\alpha =0$, we have
that a reduced countable torsion module of Ulm length at most $1$ is a
countable direct sum of cyclic modules, and the conclusion follows. For $%
\alpha $ limit, we have that a countable torsion module of Ulm length at
most $\alpha $ is a countable direct sum of modules of Ulm length at most $%
\alpha _{n}$ for $n<\omega $, and the conclusion follows from the inductive
hypothesis. Suppose that $\alpha $ is a successor and $C$ has Ulm length $%
\alpha $. Then by the Ulm Classification Theorem \cite[Theorem 77.3,
Corollary 76.2]{fuchs_infinite_1973} we can assume without loss of
generality that $u_{\alpha -1}C$ is finite. By the inductive hypothesis, $%
C/u_{\alpha -1}C$ is a colimit of a presheaf $\left( F_{i}\right) _{i\in
I_{\alpha -1}}$ of finite torsion modules over $I_{1+\alpha -1}$. From $%
u_{\alpha -1}C$ and $\left( F_{i}\right) _{i\in I_{1+\alpha -1}}$ one easily
produces a presheaf of finite torsion modules over $I_{1+\alpha }^{\mathrm{%
plain}}$ with colimit $C$.
\end{proof}

Notice that by virtue of Theorem \ref{Theorem:ulm}, the projective length of 
$C$ is at most $\alpha $ if and only if $C$ is $p^{\omega \alpha }$%
-projective in the sense of \cite{keef_injective_1994}. Thus, Corollary \ref%
{Corollary:projective} recovers a classical result of Nunke from \cite%
{nunke_purity_1963}; see also \cite[Chapter XII, page 92, Exercise 2]%
{fuchs_infinite_1973}.

\subsection{Reduction to the local case}

In this last section, we assume that $R$ is a countable Dedekind domain, and
assume all modules to be $R$-modules. We show that in the study of (higher
order) pure extensions of torsion modules, one can reduce to the local case
by localizing with respect to prime ideals. We let $\mathbb{P}$ be the set
of nonzero prime ideals of $R$. For a countable module $C$, we let%
\begin{equation*}
C_{\mathfrak{p}}:=\left\{ x\in C:\mathfrak{p}x=0\right\}
\end{equation*}%
be its $\mathfrak{p}$-primary submodule. It is easily seen using the prime
factorization of ideals in Dedekind domains that every torsion module is the
direct sum of its $\mathfrak{p}$-primary submodules for $\mathfrak{p}\in 
\mathbb{P}$. A module $A$ is $\mathfrak{p}$-divisible if $\mathfrak{p}A=A$.

\begin{lemma}
\label{Lemma:p,q-zero}Fix $\mathfrak{p}\in \mathbb{P}$. Suppose that $A,C$
are countable modules such that $C$ is $\mathfrak{p}$-primary and $A$ is $%
\mathfrak{p}$-divisible. Then $\mathrm{Ext}\left( C,A\right) =0$.
\end{lemma}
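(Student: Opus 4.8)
The plan is to show directly that every extension $0\rightarrow A\rightarrow X\overset{\pi }{\rightarrow }C\rightarrow 0$ splits, which is equivalent to $\mathrm{Ext}(C,A)=0$. Two facts drive the argument, both coming from $p$-divisibility of $A$: first, $\mathrm{Ext}$ of any \emph{finite} $p$-group into $A$ vanishes, and second, the restriction maps between $\mathrm{Hom}$-groups along finite $p$-subgroups are surjective. I would then assemble a global splitting from compatible splittings over an exhausting chain of finite subgroups of $C$.

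First I would record the finite case. Since $A$ is $p$-divisible we have $p^{k}A=A$, so from the short exact sequence $\mathbb{Z}\overset{p^{k}}{\rightarrow }\mathbb{Z}\rightarrow \mathbb{Z}/p^{k}\mathbb{Z}$ and the associated $\mathrm{Hom}$--$\mathrm{Ext}$ exact sequence of Section~\ref{Section:Ext-Hom} one gets $\mathrm{Ext}(\mathbb{Z}/p^{k}\mathbb{Z},A)\cong A/p^{k}A=0$. As every finite $p$-group $F$ is a finite direct sum of cyclic $p$-groups and $\mathrm{Ext}(\bigoplus_{i}F_{i},A)\cong \prod_{i}\mathrm{Ext}(F_{i},A)$, it follows that $\mathrm{Ext}(F,A)=0$ for every finite $p$-group $F$. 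In particular, for finite subgroups $F_{0}\subseteq F_{1}\subseteq C$ the exact sequence attached to $0\rightarrow F_{0}\rightarrow F_{1}\rightarrow F_{1}/F_{0}\rightarrow 0$ reads $\mathrm{Hom}(F_{1},A)\rightarrow \mathrm{Hom}(F_{0},A)\rightarrow \mathrm{Ext}(F_{1}/F_{0},A)=0$, so the restriction map $\mathrm{Hom}(F_{1},A)\rightarrow \mathrm{Hom}(F_{0},A)$ is surjective.

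Next I would write $C$ as an increasing union $C=\bigcup_{n}C_{n}$ of finite subgroups (possible since $C$ is a countable $p$-group) and build splittings $s_{n}\colon C_{n}\rightarrow \pi ^{-1}(C_{n})$ of the restricted extensions $0\rightarrow A\rightarrow \pi ^{-1}(C_{n})\rightarrow C_{n}\rightarrow 0$ satisfying $s_{n+1}|_{C_{n}}=s_{n}$. At each stage $\mathrm{Ext}(C_{n},A)=0$ guarantees that \emph{some} splitting over $C_{n}$ exists; given $s_{n}$, I would choose any splitting $s'$ over $C_{n+1}$, observe that $s'|_{C_{n}}-s_{n}\in \mathrm{Hom}(C_{n},A)$, lift this difference through the surjective restriction $\mathrm{Hom}(C_{n+1},A)\rightarrow \mathrm{Hom}(C_{n},A)$, and correct $s'$ by the lift to obtain $s_{n+1}$ agreeing with $s_{n}$ on $C_{n}$. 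The union $s:=\bigcup_{n}s_{n}\colon C\rightarrow X$ is then a homomorphism splitting $\pi $, so the extension splits.

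The main obstacle is precisely this coherent inductive choice: individually each extension over $C_{n}$ splits, but the splittings must be threaded together compatibly along the chain. This is a Mittag--Leffler / $\varprojlim ^{1}$ phenomenon, and the key point is that $p$-divisibility of $A$ is what simultaneously forces the vanishing of the obstruction groups $\mathrm{Ext}(C_{n},A)$ and the surjectivity of the transition maps $\mathrm{Hom}(C_{n+1},A)\rightarrow \mathrm{Hom}(C_{n},A)$, the latter being exactly the condition guaranteeing that $\varprojlim ^{1}$ of the $\mathrm{Hom}$-system vanishes.
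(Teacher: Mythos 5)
Your argument is correct, but it is genuinely different from the paper's proof. You split an arbitrary extension $0\rightarrow A\rightarrow X\overset{\pi}{\rightarrow}C\rightarrow 0$ by exhausting the countable $p$-group $C$ by finite subgroups $C_{n}$, using $\mathrm{Ext}(\mathbb{Z}/p^{k}\mathbb{Z},A)\cong A/p^{k}A=0$ to split over each $C_{n}$, and using the surjectivity of the restrictions $\mathrm{Hom}(C_{n+1},A)\rightarrow \mathrm{Hom}(C_{n},A)$ (again from $p$-divisibility) to correct the splittings so that they cohere; all the individual steps check out, including the key correction step, since the difference of two homomorphic sections over $C_{n}$ is a homomorphism $C_{n}\rightarrow A$ that can be extended to $C_{n+1}$. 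The paper argues instead by embedding $A$ into its divisible hull $D$: because $A$ is $p$-divisible and $D$ is an essential extension, the torsion group $D/A$ has trivial $p$-primary component, so $\mathrm{Hom}(C,D/A)=0$, while $\mathrm{Ext}(C,D)=0$ since $D$ is divisible; the exact sequence $\mathrm{Hom}(C,D/A)\rightarrow \mathrm{Ext}(C,A)\rightarrow \mathrm{Ext}(C,D)$ then forces $\mathrm{Ext}(C,A)=0$. The paper's route is shorter, purely functorial, uses only the $\mathrm{Hom}$--$\mathrm{Ext}$ sequence in the coefficient variable, and does not need countability of $C$; your route is more elementary and constructive---it actually produces the splitting and makes the Mittag--Leffler/$\varprojlim^{1}$ mechanism explicit---at the cost of relying on $C$ being a countable (hence locally finite, exhaustible) $p$-group and of the inductive bookkeeping. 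Either proof establishes the lemma as stated.
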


\begin{proof}
Let $D$ be the divisible hull of $A$ \cite[Section 24]{fuchs_infinite_1970}.
Then $D/A$ is a torsion module with trivial $\mathfrak{p}$-primary
component. In particular, $\mathrm{Hom}\left( C,D/A\right) =0$. As $D$ is
divisible, $\mathrm{Ext}\left( C,D\right) =0$. Considering the exact sequence%
\begin{equation*}
\mathrm{Hom}\left( C,D/A\right) \rightarrow \mathrm{Ext}\left( C,A\right)
\rightarrow \mathrm{Ext}\left( C,D\right)
\end{equation*}%
induced by the exact sequence $A\rightarrow D\rightarrow D/A$, we conclude
that $\mathrm{Ext}\left( C,A\right) =0$.
\end{proof}

\begin{lemma}
\label{Lemma:reduce-p}Suppose that $C$ is a countable $\mathfrak{p}$-primary
module and $A$ is a countable torsion module. Then the inclusion $A_{%
\mathfrak{p}}\rightarrow A$ induces an isomorphism $\mathrm{PExt}\left( C,A_{%
\mathfrak{p}}\right) \cong \mathrm{PExt}\left( C,A\right) $.
\end{lemma}

\begin{proof}
We can write $A=A_{\mathfrak{p}}\oplus B$ where $B$ is $\mathfrak{p}$%
-divisible. By Lemma \ref{Lemma:p,q-zero}, $\mathrm{PExt}\left( C,B\right)
=0 $. Therefore, $\mathrm{PExt}\left( C,A\right) $ is isomorphic to $\mathrm{%
PExt}\left( C,A_{\mathfrak{p}}\right) \oplus \mathrm{PExt}\left( C,B\right) =%
\mathrm{PExt}\left( C,A_{\mathfrak{p}}\right) $.
\end{proof}

\begin{proposition}
\label{Proposition:reduce-p}Let $R$ be a countable Dedekind domain. Suppose
that $C$ is a countable torsion module and $A$ is a countable reduced
module. Then $\mathrm{PExt}\left( C,A\right) $ and%
\begin{equation*}
\prod_{\mathfrak{p}\in \mathbb{P}}\mathrm{PExt}\left( C_{\mathfrak{p}},A_{%
\mathfrak{p}}\right)
\end{equation*}%
are isomorphic modules with a Polish cover.
\end{proposition}

\begin{proof}
As observed in Section \ref{Section:pure}, we can assume without loss of
generality that $A$ is a torsion module. Since $C$ is a countable torsion
module, we have that $C\cong \bigoplus_{\mathfrak{p}}C_{\mathfrak{p}}$.\
Therefore, $\mathrm{PExt}\left( C,A\right) $ is isomorphic to $\prod_{%
\mathfrak{p}}\mathrm{PExt}\left( C_{\mathfrak{p}},A\right) $. By Lemma \ref%
{Lemma:reduce-p}, for every prime $p$ we have that $\mathrm{PExt}\left( C_{%
\mathfrak{p}},A\right) $ is isomorphic to $\mathrm{PExt}\left( C_{\mathfrak{p%
}},A_{\mathfrak{p}}\right) $. This concludes the proof.
\end{proof}

%    Bibliography styles amsplain or author-year (using natbib) are
%    also acceptable.
\bibliographystyle{amsalpha}
\bibliography{bibliography}
%    See note above about multiple indexes.
\printindex

\end{document}